\numberwithin{equation}{section}
\numberwithin{figure}{section}
\newtheorem{definition}{Definition} 
\newtheorem{theorem}{Theorem}
\newtheorem{lemma}[theorem]{Lemma} 
\newtheorem{corollary}[theorem]{Corollary}
\newtheorem{proposition}[theorem]{Proposition}
\newtheorem*{remark}{Remark}
\newcommand{\Address}{{
\bigskip
\footnotesize
\textsc{Jianghao Zhang: Mathematical Institute, University of Bonn, Endenicher Allee 60, 53115 Bonn, Germany}\par\nopagebreak
\textit{Email address}: \texttt{Jianghao@uni-bonn.de}
}}
\def\mathbi#1{\textbf{\em #1}}
\begin{document}
\title{\textbf{On Type \uppercase\expandafter{\romannumeral4} Superorthogonality}}
\author{Jianghao Zhang}
\date{}
\maketitle

\begin{abstract}
We prove the direct and the converse inequality for type \uppercase\expandafter{\romannumeral4} superorthogonality in the vector-valued setting. The converse one is also new in the scalar setting.
\end{abstract}

\section{Introduction}
Fix a $\sigma\text{-finite}$ measure space $(X, \mu)$. Let $r\in \mathbb{N}_+$ and $\{f_l\}_{1\leq l\leq L}$ be a family of complex-valued functions in $L^{2r}(X, d\mu)$. We are interested in the direct and the converse inequality:
\begin{align}
\label{EquationSection1DirectInequalityScalar-Valued} & \|\sum_{l=1}^L f_l\|_{2r} \lesssim_{r} \|(\sum_{l=1}^{L} |f_l|^2)^{\frac{1}{2}}\|_{2r}, \\
\label{EquationSection1ConverseInequalityScalar-Valued} & \|(\sum_{l=1}^{L} |f_l|^2)^{\frac{1}{2}}\|_{2r} \lesssim_{r} \|\sum_{l=1}^L f_l\|_{2r}.
\end{align}
One can expand the $L^{2r}$ norms on both sides. Several terms appear in only one of the two expansions. To obtain \eqref{EquationSection1DirectInequalityScalar-Valued} and \eqref{EquationSection1ConverseInequalityScalar-Valued}, we assume that $\{f_l\}_{1\leq l\leq L}$ has superorthogonality which means many such terms vanish:
\begin{equation} \label{EquationSection1SuperorthogonalityScalar-Valued}
\int_{X} f_{l_1} \overline{f_{l_2}} \cdots f_{l_{2r-1}} \overline{f_{l_{2r}}} d\mu=0.
\end{equation}
This condition, traced back to the last century, was systematically developed in
\cite{IWIonescu-Wainger}\cite{PierceOnSuperorthogonality}\cite{GPRYANewTypeOfSuperOrthogonality}. In fact, \cite{PierceOnSuperorthogonality}\cite{GPRYANewTypeOfSuperOrthogonality} introduced several types of superorthogonality:
\begin{description}
\item [Type \uppercase\expandafter{\romannumeral1}*]: \eqref{EquationSection1SuperorthogonalityScalar-Valued} holds whenever $l_1, l_3, \cdots, l_{2r-1}$ is not a permutation of $l_2, l_4, \cdots, l_{2r}$.
\item [Type \uppercase\expandafter{\romannumeral1}]: \eqref{EquationSection1SuperorthogonalityScalar-Valued} holds whenever some $l_j$ appears an odd time in $l_1, \cdots, l_{2r}$.
\item [Type \uppercase\expandafter{\romannumeral2}]: \eqref{EquationSection1SuperorthogonalityScalar-Valued} holds whenever some $l_j$ appears precisely once in $l_1, \cdots, l_{2r}$.
\item [Type \uppercase\expandafter{\romannumeral3}]: \eqref{EquationSection1SuperorthogonalityScalar-Valued} holds whenever some $l_j$ is strictly larger than all other indices in $l_1, \cdots, l_{2r}$.
\item [Type \uppercase\expandafter{\romannumeral4}]: \eqref{EquationSection1SuperorthogonalityScalar-Valued} holds whenever $l_1, \cdots, l_{2r}$ are all distinct.
\end{description}
The assumption on $\{f_l\}_{1\leq l\leq L}$ is getting weaker from top to bottom. Notably, \cite{GPRYANewTypeOfSuperOrthogonality} showed that type \uppercase\expandafter{\romannumeral4} superorthogonality implies the direct inequality \eqref{EquationSection1DirectInequalityScalar-Valued}. For the converse inequality \eqref{EquationSection1ConverseInequalityScalar-Valued}, we see that a necessary condition is
\begin{equation}  \label{EquationSection1ConverseInequalityNecessaryConditionScalar-Valued}
\|(\sum_{l=1}^{L} |f_l|^{2r})^{\frac{1}{2r}}\|_{2r} \lesssim_{r} \|\sum_{l=1}^L f_l\|_{2r}.   
\end{equation}
In general, \eqref{EquationSection1ConverseInequalityNecessaryConditionScalar-Valued} is much weaker than \eqref{EquationSection1ConverseInequalityScalar-Valued} and can be proved by interpolation. A classical work due to Paley, as explained in \cite{PierceOnSuperorthogonality}, showed that \eqref{EquationSection1ConverseInequalityNecessaryConditionScalar-Valued} is also sufficient for \eqref{EquationSection1ConverseInequalityScalar-Valued} if $\{f_l\}_{1\leq l\leq L}$ has type \uppercase\expandafter{\romannumeral3} superorthogonality. One can also consider the superorthogonality phenomenon in the vector-valued setting. Assume $f_l, 1\leq l\leq L$ take values in a Hilbert space. We replace the condition \eqref{EquationSection1SuperorthogonalityScalar-Valued} with
\begin{equation*}
\int_{X} \langle f_{l_1}, f_{l_2} \rangle \cdots \langle f_{l_{2r-1}}, f_{l_{2r}} \rangle d\mu=0,  
\end{equation*}
and have the same classification of superorthogonality as above. In this vector-valued case, the relatively simple observation that type \uppercase\expandafter{\romannumeral2} superorthogonality implies the direct inequality \eqref{EquationSection1DirectInequalityScalar-Valued} was
used in \cite{MSZKIonescu-Wainger} and \cite{TaoIonescu-Wainger}.

In this paper, we establish the vector-valued direct and the vector-valued converse inequality for type \uppercase\expandafter{\romannumeral4} superorthogonality. The former extends its scalar version proved in \cite{GPRYANewTypeOfSuperOrthogonality}, while the latter is also new in the scalar setting. The main ingredient is an algebraic identity for tensor products. A circle structure from this identity is crucial when proving the converse inequality for vectors.

\begin{definition}
Let $V$ be a vector space over $\mathbb{R}$. A bilinear form $\mathfrak{B}: V^{2}\rightarrow \mathbb{C}$ is called positive if
\begin{equation*}
\mathfrak{B}(v, v)\geq 0, \forall v\in V.
\end{equation*}
In this case, denote $\mathfrak{B}(v, v)^{\frac{1}{2}}$ by $\mathfrak{B}(v)$.
\end{definition}
\begin{remark}
Note that every vector space over $\mathbb{C}$ can be regarded as a vector space over $\mathbb{R}$. In particular, the inner product on any complex Hilbert space is a positive bilinear form.
\end{remark}
\noindent Our first main result concerns the vector-valued direct inequality:
\begin{theorem} \label{TheoremSection1DirectInequality}
Fix a positive integer $r$, a $\sigma\text{-finite}$ measure space $(X, d\mu)$, and a vector space $V$ over $\mathbb{R}$. Let $\mathfrak{B}$ be a positive bilinear form on $V$. Assume that $f_l: X\rightarrow V, 1\leq l\leq L$ make $\mathfrak{B}(f_{l_{1}}, f_{l_{2}})\cdots \mathfrak{B}(f_{l_{2r-1}}, f_{l_{2r}}), 1\leq l_j\leq L$ measurable and integrable. Suppose $\{f_l\}_{1\leq l\leq L}$ satisfies type \uppercase\expandafter{\romannumeral4} superorthogonality:
\begin{equation} \label{EquationSection1Superorthogonality}
\int_{X} \mathfrak{B}(f_{l_{1}}, f_{l_{2}})\cdots \mathfrak{B}(f_{l_{2r-1}}, f_{l_{2r}}) d\mu=0,\ \text{whenever $l_1, \cdots, l_{2r}$ are all distinct}.
\end{equation}
Then
\begin{equation*}
\|\mathfrak{B}(\sum_{l=1}^L f_l)\|_{2r}\lesssim_{r} \|\big( \sum_{l=1}^L \mathfrak{B}(f_l)^2 \big)^{\frac{1}{2}}\|_{2r}.
\end{equation*}
\end{theorem}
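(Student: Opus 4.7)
The plan is to reduce to the symmetric case (so that the pointwise Cauchy--Schwarz $|\mathfrak{B}(u,v)|\le\mathfrak{B}(u)\mathfrak{B}(v)$ is available), expand the $2r$-th power on the left into a sum over index tuples, annihilate the tuples with pairwise distinct entries via type IV, group the survivors by the set-partition of $\{1,\ldots,2r\}$ they induce, and estimate each partition-class so that the singleton blocks can be absorbed via Young's inequality.

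I first replace $\mathfrak{B}$ by its symmetric part $\mathfrak{B}^{s}(u,v):=\tfrac{1}{2}(\mathfrak{B}(u,v)+\mathfrak{B}(v,u))$; this preserves both sides of the inequality (since $\mathfrak{B}^{s}(v,v)=\mathfrak{B}(v,v)$) and also hypothesis \eqref{EquationSection1Superorthogonality} (each of the $2^{r}$ terms in the expansion of $\prod_{j}\mathfrak{B}^{s}(f_{l_{2j-1}},f_{l_{2j}})$ is a type IV integrand for $\mathfrak{B}$). Because $\mathfrak{B}(u+tv,u+tv)\ge 0$ for all real $t$ forces $\mathfrak{B}^{s}$ to be a real symmetric positive bilinear form, the bound $|\mathfrak{B}(u,v)|\le\mathfrak{B}(u)\mathfrak{B}(v)$ is then available. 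Using the algebraic tensor identity $\mathfrak{B}(v)^{2r}=\mathfrak{B}(v,v)^{r}=\mathfrak{B}^{\otimes r}(v^{\otimes r},v^{\otimes r})$ with $v=\sum_{l}f_{l}(x)$ and integrating,
\begin{equation*}
\|\mathfrak{B}(\sum_{l}f_{l})\|_{2r}^{2r}=\sum_{\mathbf{l}\in\{1,\ldots,L\}^{2r}}I(\mathbf{l}),\qquad I(\mathbf{l}):=\int_{X}\prod_{j=1}^{r}\mathfrak{B}(f_{l_{2j-1}},f_{l_{2j}})\myd\mu;
\end{equation*}
\eqref{EquationSection1Superorthogonality} then annihilates every $\mathbf{l}$ with pairwise distinct entries.

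I group the surviving tuples by the set-partition $\pi$ of $\{1,\ldots,2r\}$ defined by $i\sim_{\pi}j\iff l_{i}=l_{j}$ (necessarily $|\pi|<2r$), writing $S_{\pi}$ for the corresponding sub-sum and $s(\pi)$ for the number of singleton blocks of $\pi$. The crucial claim, which I prove by induction on $s(\pi)$, is
\begin{equation*}
|S_{\pi}|\lesssim_{r}\|\mathfrak{B}(\sum_{l}f_{l})\|_{2r}^{s(\pi)}\cdot\bigl\|\bigl(\sum_{l}\mathfrak{B}(f_{l})^{2}\bigr)^{1/2}\bigr\|_{2r}^{2r-s(\pi)}.
\end{equation*}
When $s(\pi)=0$ every block has size $\ge 2$, so Cauchy--Schwarz on each factor $\mathfrak{B}(f_{l_{2j-1}},f_{l_{2j}})$ together with $\sum_{l}\mathfrak{B}(f_{l})^{p}\le(\sum_{l}\mathfrak{B}(f_{l})^{2})^{p/2}$ for $p\ge 2$ immediately collapses $|S_{\pi}|$ to $\int(\sum_{l}\mathfrak{B}(f_{l})^{2})^{r}\myd\mu$. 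When $s(\pi)\ge 1$, I enlarge the range of each singleton index $l_{i_{0}}$ (for $\{i_{0}\}\in\pi$) from ``distinct from the remaining entries'' to $\{1,\ldots,L\}$; this turns each such factor $f_{l_{i_{0}}}$ in the integrand into $\sum_{l}f_{l}$. The resulting main term is pointwise bounded by $\mathfrak{B}(\sum_{l}f_{l})^{s(\pi)}\cdot(\sum_{l}\mathfrak{B}(f_{l})^{2})^{(2r-s(\pi))/2}$ (after Cauchy--Schwarz on every $\mathfrak{B}$-factor and the $\ell^{p}\hookrightarrow\ell^{2}$ embedding on the non-singleton blocks), and H\"older ($L^{2r/s(\pi)}\cdot L^{2r/(2r-s(\pi))}=L^{1}$) produces the claimed bound. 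The overcounted contributions correspond to strictly coarser partitions with fewer singletons, controlled by the inductive hypothesis.

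Summing the claim over the finitely many $\pi$ and applying Young's inequality $ab\le\epsilon a^{p}+C_{\epsilon}b^{p'}$ with $p=2r/s(\pi)$ to each $s(\pi)\ge 1$ contribution bounds it by $\epsilon\|\mathfrak{B}(\sum_{l}f_{l})\|_{2r}^{2r}+C_{\epsilon}\|(\sum_{l}\mathfrak{B}(f_{l})^{2})^{1/2}\|_{2r}^{2r}$; choosing $\epsilon$ small enough that the accumulated $\epsilon$-piece absorbs into the left-hand side completes the proof. The main obstacle is precisely the singleton blocks, which naively would cost a spurious factor $\sqrt{L}$ apiece through $\sum_{l}\mathfrak{B}(f_{l})\le\sqrt{L}(\sum_{l}\mathfrak{B}(f_{l})^{2})^{1/2}$; the completion trick trades this cost for one power of $\|\mathfrak{B}(\sum_{l}f_{l})\|_{2r}$ per singleton, which Young's inequality then absorbs.
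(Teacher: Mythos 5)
Your proposal is correct and follows essentially the same route as the paper: symmetrize $\mathfrak{B}$, expand the $2r$-th power, kill the all-distinct tuples by type IV superorthogonality, decompose the remainder by set partitions, bound each partition class by $\|\mathfrak{B}(\sum_l f_l)\|_{2r}^{s}\|(\sum_l\mathfrak{B}(f_l)^2)^{1/2}\|_{2r}^{2r-s}$ with $s\le 2r-2$ via Cauchy--Schwarz, $\ell^2\hookrightarrow\ell^p$, and H\"older, then absorb. The only differences are presentational: the paper packages the inclusion--exclusion bookkeeping as a standalone tensor identity (Corollary \ref{CorollarySection2AlgebraicIdentity2}) and closes with the polynomial inequality $t^{2r}\le Q(t)$, whereas you run the completion of singleton indices inline by induction on $s(\pi)$ and close with Young's inequality; these are equivalent.
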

\noindent Our second main result is the vector-valued converse inequality:
\begin{theorem} \label{TheoremSection1ConverseInequality}
Under the same conditions of Theorem \ref{TheoremSection1DirectInequality}, if we further assume
\begin{equation} \label{EquationSection1ConverseInequalityNecessaryCondition}
\|\big( \sum_{l=1}^L \mathfrak{B}(f_l)^{2r} \big)^{\frac{1}{2r}}\|_{2r}\lesssim \|\mathfrak{B}(\sum_{l=1}^L f_l)\|_{2r},   
\end{equation}
then
\begin{equation*}
\|\big( \sum_{l=1}^L \mathfrak{B}(f_l)^2 \big)^{\frac{1}{2}}\|_{2r}\lesssim_{r} \|\mathfrak{B}(\sum_{l=1}^L f_l)\|_{2r}.
\end{equation*}
\end{theorem}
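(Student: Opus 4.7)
The plan is to expand the $2r$-th power of the right-hand-side norm multinomially, use the type IV hypothesis \eqref{EquationSection1Superorthogonality} to discard the tuples with all indices distinct, identify the ``pairwise diagonal'' tuples as a main term equal exactly to the quantity we wish to dominate, and control the remaining error by a Cauchy--Schwarz analysis based on the circle/chain structure of the tuples, combined with the hypothesis \eqref{EquationSection1ConverseInequalityNecessaryCondition}.

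Setting $S:=\sum_{l=1}^{L}f_l$ and expanding
\[
\|\mathfrak{B}(S)\|_{2r}^{2r}=\int_X\mathfrak{B}(S,S)^r\myd\mu=\sum_{l_1,\ldots,l_{2r}=1}^L\int_X\prod_{j=1}^r\mathfrak{B}(f_{l_{2j-1}},f_{l_{2j}})\myd\mu,
\]
every tuple with $l_1,\ldots,l_{2r}$ pairwise distinct contributes zero by \eqref{EquationSection1Superorthogonality}, while the ``pairwise diagonal'' tuples $\{l_{2j-1}=l_{2j}\ \forall j\}$ sum precisely to $\|(\sum_l\mathfrak{B}(f_l)^2)^{1/2}\|_{2r}^{2r}$. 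Denoting by $E$ the sum over the remaining tuples (neither distinct nor pairwise diagonal), I would obtain the identity $\|\mathfrak{B}(S)\|_{2r}^{2r}=\|(\sum_l\mathfrak{B}(f_l)^2)^{1/2}\|_{2r}^{2r}+E$, so the theorem reduces to a bound of the form $|E|\leq(1-c)\,\|(\sum_l\mathfrak{B}(f_l)^2)^{1/2}\|_{2r}^{2r}+C\,\|\mathfrak{B}(S)\|_{2r}^{2r}$ for some $c\in(0,1)$, which can then be rearranged to conclude.

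For each tuple contributing to $E$ I would associate the multigraph whose vertices are the distinct values occurring in $(l_1,\ldots,l_{2r})$ and whose edges come from the matching $(1,2)(3,4)\cdots(2r-1,2r)$; every connected component is either a closed cycle or an open path, and this is the ``circle structure'' from the algebraic identity alluded to in the introduction. Iterating $|\mathfrak{B}(u,v)|\leq\mathfrak{B}(u)\mathfrak{B}(v)$ around each cycle distributes the exponents evenly, bounding cycle-only configurations by $\prod_v\mathfrak{B}(f_v)^{2k_v}$ with $\sum_v k_v=r$, after which AM--GM or H\"older yields a controllable multiple of $(\sum_l\mathfrak{B}(f_l)^2)^r$.

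The main obstacle will be the case when the multigraph contains an open path, equivalently when some index value is a \emph{singleton} of multiplicity one, for which the Cauchy--Schwarz bound leaves a stray factor $\mathfrak{B}(f_v)^1$ that does not fit the $(\sum_l\mathfrak{B}(f_l)^2)^r$ template. For this I plan a \emph{singleton extension}: summing over the singleton index $v$ converts the factor $\mathfrak{B}(f_v,f_w)$ into $\mathfrak{B}(S,f_w)$ at the cost of corrective terms in which $v$ coincides with another index (these lie in configurations with strictly fewer singletons and are handled by downward induction), while the extended factor is controlled through $\mathfrak{B}(S,f_w)\leq\mathfrak{B}(S)\mathfrak{B}(f_w)$ and H\"older to produce a mixed estimate of the form $\|\mathfrak{B}(S)\|_{2r}^\alpha\|(\sum_l\mathfrak{B}(f_l)^2)^{1/2}\|_{2r}^{2r-\alpha}$ with $\alpha\geq 1$. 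At the base of the induction only diagonal contributions of the shape $\int_X\sum_l\mathfrak{B}(f_l)^{2r}\myd\mu$ remain, to which the assumption \eqref{EquationSection1ConverseInequalityNecessaryCondition} applies. A final application of Young's inequality absorbs every mixed factor with exponent $2r-\alpha<2r$ on the left-hand-side quantity, closing the argument. The most delicate step will be the combinatorial bookkeeping in the singleton-extension induction and the verification that the total absorbed coefficient stays strictly below one.
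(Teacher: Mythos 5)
Your overall architecture (expand the $2r$-th power, discard the distinct tuples, isolate the pairwise-diagonal main term, classify the remainder by the multigraph induced by the pairing $(1,2)\cdots(2r-1,2r)$) is parallel to the paper's, but there is a genuine gap in your treatment of the cycle-only configurations, i.e.\ the tuples in which every value occurs exactly twice. You propose to bound these in absolute value by iterating $|\mathfrak{B}(u,v)|\leq\mathfrak{B}(u)\mathfrak{B}(v)$ and then to absorb the result as $(1-c)$ times the main term. This cannot work: each such configuration is bounded only by $(\sum_l\mathfrak{B}(f_l)^2)^r$ itself (for $r=2$, the pattern $l_1=l_3,\ l_2=l_4$ contributes $\sum_{a\neq b}\mathfrak{B}(f_a,f_b)^2$, which can be comparable to the full main term), there are on the order of $(2r-1)!!$ of them, and they carry no factor of $\|\mathfrak{B}(\sum_l f_l)\|_{2r}$ or of $(\sum_l\mathfrak{B}(f_l)^{2r})^{1/2r}$ that would let you trade them away via Young's inequality or via \eqref{EquationSection1ConverseInequalityNecessaryCondition}. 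These terms must be handled by sign, not by size: the paper first converts all constrained (distinct-index) sums into unconstrained ones through the inclusion--exclusion identity of Theorem \ref{TheoremSection2AlgebraicIdentity1}, observes that every all-pairs partition then carries the \emph{same} coefficient $(-1)^r$, and proves (Lemma \ref{LemmaSection3Positivity}, by folding each circle across an axis) that each unconstrained cycle sum is nonnegative; hence the whole block has a definite sign and is bounded below by the main term, and only the remaining partitions are estimated in absolute value. Note that the passage to unconstrained sums is essential for the positivity: the constrained $3$-cycle sum $\sum^*_{a,b,c}\mathfrak{B}(v_a,v_b)\mathfrak{B}(v_b,v_c)\mathfrak{B}(v_c,v_a)$ can be strictly negative (three unit vectors in $\mathbb{R}^2$ at suitable angles), so you may not simply discard these configurations either.

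Two smaller issues. First, your claim that every component of the multigraph is a closed cycle or an open path fails as soon as some value has multiplicity at least $3$ (that vertex then has degree at least $3$); these are exactly the configurations for which hypothesis \eqref{EquationSection1ConverseInequalityNecessaryCondition} is genuinely needed (the paper's ``joint'' partitions), and your plan reaches them only through the unspecified ``base of the induction.'' Second, your ``singleton extension'' with corrective terms is an ad hoc re-derivation of the inclusion--exclusion mechanism; carried out systematically it leads back to Theorem \ref{TheoremSection2AlgebraicIdentity1}, at which point you would still need the positivity input above to close the argument.
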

\noindent Combining Theorem \ref{TheoremSection1DirectInequality} and Theorem \ref{TheoremSection1ConverseInequality}, together with standard interpolation arguments, gives a new square function estimate. We only state the scalar case for simplicity.
\begin{corollary}  \label{CorollarySection1Littlewood-PaleyTheory}
Let $\Delta_l, 1\leq l\leq L$, be bounded self-adjoint operators on $L^2$ such that $\sum\limits_{l=1}^L \Delta_l=I$. Assume $\Delta_l f\in L^p, \forall 1\leq l\leq L, \forall 1<p<\infty$ for any simple function $f$. If for any $r\in \mathbb{N}_+$ and any simple function $f$, $\{\Delta_l f\}_{l}$ satisfies type \uppercase\expandafter{\romannumeral4} superorthogonality and
\begin{equation*}
\|(\sum_{l=1}^L |\Delta_{l} f|^{2r})^{\frac{1}{2r}}\|_{2r}\lesssim_r \|f\|_{2r};
\end{equation*}
then
\begin{equation*}
\|f\|_p \sim_p \|(\sum_{l=1}^L |\Delta_{l} f|^2)^{\frac{1}{2}}\|_p, \forall f\in L^p, \forall 1<p<\infty. 
\end{equation*}
\end{corollary}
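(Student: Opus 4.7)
My plan is to establish the equivalence first at the discrete exponents $p=2r$ by direct appeal to Theorems \ref{TheoremSection1DirectInequality} and \ref{TheoremSection1ConverseInequality}, and then to bootstrap to all $p\in(1,\infty)$ via two separate mechanisms: Rademacher randomisation with Riesz--Thorin interpolation for the upper bound, and polarisation of type IV superorthogonality yielding a cross-orthogonality relation, together with duality, for the lower bound. All arguments are carried out for simple $f$ and extended at the end by density. Write $Sf:=(\sum_{l=1}^{L}|\Delta_l f|^2)^{1/2}$. At $p=2r$ the corollary's hypothesis is precisely the necessary condition \eqref{EquationSection1ConverseInequalityNecessaryCondition}, so Theorem \ref{TheoremSection1ConverseInequality} gives $\|Sf\|_{2r}\lesssim_r\|f\|_{2r}$, while Theorem \ref{TheoremSection1DirectInequality} applied to $\{\Delta_l f\}_l$ gives $\|f\|_{2r}=\|\sum_l\Delta_l f\|_{2r}\lesssim_r\|Sf\|_{2r}$.

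For any sign sequence $\epsilon\in\{\pm 1\}^L$ the family $\{\epsilon_l\Delta_l f\}_l$ still satisfies type IV superorthogonality, since the factor $\epsilon_{l_1}\cdots\epsilon_{l_{2r}}$ pulls out of the integral. Hence Theorem \ref{TheoremSection1DirectInequality} applied to this family, combined with the bound on $\|Sf\|_{2r}$ just obtained, shows that the linear operator $T_\epsilon f:=\sum_l\epsilon_l\Delta_l f$ satisfies $\|T_\epsilon f\|_{2r}\lesssim_r\|f\|_{2r}$ uniformly in $\epsilon$. Type IV at $r=1$ is plain $L^2$-orthogonality of $\{\Delta_l f\}_l$, so $T_\epsilon$ is an isometry on $L^2$. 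Riesz--Thorin interpolation between $p=2$ and $p=2r$ together with duality (each $T_\epsilon$ is self-adjoint because the $\Delta_l$ are and the $\epsilon_l$ are real) upgrades this to $\|T_\epsilon\|_{L^p\to L^p}\lesssim_p 1$ uniformly in $\epsilon$ for every $p\in(1,\infty)$. Applying Khintchine's inequality pointwise in $x$ and then Fubini yields
$$\|Sf\|_p^p\sim_p\mathbb{E}_\epsilon\|T_\epsilon f\|_p^p\lesssim_p\|f\|_p^p,$$
which is the upper bound $\|Sf\|_p\lesssim_p\|f\|_p$ on all of $(1,\infty)$.

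For the reverse inequality I polarise type IV at $r=1$: applied with $f$ replaced by the simple functions $f+g$ and $f+ig$, the four-term expansion of the resulting integral, after cancelling the contributions of $f$ and of $g$ alone, delivers the cross-orthogonality
$$\int_X \Delta_{l_1}f\cdot\overline{\Delta_{l_2}g}\,d\mu=0,\qquad l_1\ne l_2.$$
Combined with $f=\sum_l\Delta_l f$ and $g=\sum_l\Delta_l g$, this collapses the double sum $\langle f,g\rangle=\sum_{l,l'}\int \Delta_l f\,\overline{\Delta_{l'}g}\,d\mu$ to the diagonal $\sum_l\int \Delta_l f\,\overline{\Delta_l g}\,d\mu$; pointwise Cauchy--Schwarz and Hölder then give $|\langle f,g\rangle|\le\|Sf\|_p\|Sg\|_{p'}\lesssim_p\|Sf\|_p\|g\|_{p'}$, invoking the already-proved upper bound on $\|Sg\|_{p'}$. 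Taking the supremum over $\|g\|_{p'}=1$ yields $\|f\|_p\lesssim_p\|Sf\|_p$, and a standard density argument extends the equivalence to all of $L^p$. The main obstacle is exactly this reverse direction for $p\ne 2r$: lower bounds do not interpolate directly, and the naive dualisation produces $\sum_l\Delta_l^2$, which need not equal $I$ under the bare hypothesis $\sum_l\Delta_l=I$; extracting cross-orthogonality by polarisation is the essential input that rescues the duality argument.
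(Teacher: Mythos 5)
Your proof is correct. The paper offers no argument for this corollary beyond the phrase ``standard interpolation arguments,'' and what you have written out --- Theorems \ref{TheoremSection1DirectInequality} and \ref{TheoremSection1ConverseInequality} at $p=2r$, sign-randomisation with Riesz--Thorin and self-adjoint duality for $\|Sf\|_p\lesssim_p\|f\|_p$ at general $p$, and the polarised $r=1$ cross-orthogonality plus duality for the reverse bound --- is exactly the standard route being alluded to, with all steps (including the preservation of type IV under the signs $\epsilon_l$ and the $L^2$ isometry of $T_\epsilon$ from the $r=1$ case) checking out.
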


A family of functions that has type \uppercase\expandafter{\romannumeral4} superorthogonality but doesn't have any stronger type was given in \cite{GPRYANewTypeOfSuperOrthogonality}.

We end with remarks on superorthogonality in the context of the Littlewood-Paley theory. It
is not hard to see that martingale differences satisfy type \uppercase\expandafter{\romannumeral3} superorthogonality. Thus Paley's work on type \uppercase\expandafter{\romannumeral3} superorthogonality \cite{PaleySuperorthogonality}, developed in \cite{PierceOnSuperorthogonality}, essentially established the Littlewood-Paley theory for martingales. In contrast, type \uppercase\expandafter{\romannumeral3} superorthogonality can not
be applied to Euclidean Littlewood Paley operators uniformly in the parameter $r$, while type \uppercase\expandafter{\romannumeral4} superorthogonality can be. More precisely, pick $\psi\in \mathcal{S}(\mathbb{R}^d)$ supported in $\{\xi: \frac{1}{2}\leq |\xi|\leq 2\}$ such that $\sum_{l\in \mathbb{Z}} \psi(\frac{\xi}{2^l})=1, \forall \xi\neq 0$. Define the Littlewood Paley operators:
\begin{equation*}
\Delta_lf:=\big( \psi (\frac{\xi}{2^l})\hat{f}(\xi) \big)^{\vee}.
\end{equation*}
Divide $\{\Delta_lf\}$ into groups $\{\Delta_{10l+m}f\}_l, 0\leq m\leq 9$. Then for general $f$, each group has type \uppercase\expandafter{\romannumeral4} superorthogonality for any $r\in \mathbb{N}_+$ but does not have type \uppercase\expandafter{\romannumeral3} superorthogonality for large $r$.

\paragraph{Outline of the paper.}
\begin{itemize} 
\item In Section \ref{Section2} we establish the algebraic identity Corollary \ref{CorollarySection2AlgebraicIdentity2} inspired by \cite{GPRYANewTypeOfSuperOrthogonality}.
\item In Section \ref{Section3} we use Corollary \ref{CorollarySection2AlgebraicIdentity2} to show Theorem \ref{TheoremSection1DirectInequality} and Theorem \ref{TheoremSection1ConverseInequality}.
\item In Appendix \ref{AppendixA} we discuss sharpness of the conditions in Theorem \ref{TheoremSection1DirectInequality} and Theorem \ref{TheoremSection1ConverseInequality}.
\end{itemize}

\paragraph{Notations and Conventions.}
\begin{itemize} 
\item $\left [n \right]:=\{1, \cdots, n\}, \forall n\in \mathbb{N}_{+}$.
\item Let $V_j, j\in \left[n \right]$ be vector spaces over a field $\mathbb{F}$. For each $j\in \left[n \right]$, we will work with $L$ vectors $v_{j, l_j}\in V_j, 1\leq l_j\leq L$. This $L$ is nonessential and often omitted. We also use the notation $S_j:=\sum\limits_{l_j} v_{j, l_j}=\sum\limits_{l_j=1}^L v_{j, l_j}$.
\item Given $\forall J\subset \left [n \right]$ and $\forall Z\subset \{l: 1\leq l\leq L\}$, the notation $\sum\limits_{(l_j)_{j\in J}\in Z^J}^*$ represents the sum over tuples $(l_j)_{j\in J}\in Z^J$ such that $l_j, j\in J$, are all distinct. We will omit $Z^J$ and use $\sum\limits_{(l_j)_{j\in J}}^*$ instead if $Z=\{l: 1\leq l\leq L\}$.
\item For a set $\mathscr{P}$ with elements denoted by $P$, $\sum\limits_{(l_P)_{P\in \mathscr{P}}}$ represents the sum over indices $1\leq l_P\leq L, P\in \mathscr{P}$, which are independent with each other.
\end{itemize}

\paragraph{Acknowledgement.} The author would like to thank Prof. Christoph Thiele for his numerous valuable writing suggestions. The author is also grateful to Yixuan Pang for providing some helpful comments.


\section{A General Identity}  \label{Section2}
Partitions of $\left [n \right]$ play a central role in the statement of our identity.
\begin{definition}
A partition $\mathscr{P}$ of a set $A$ means $\mathscr{P}$ is a collection of non-empty disjoint sets and $A=\bigcup_{P\in \mathscr{P}} P$. For each $j\in A$, denote the unique $P\in \mathscr{P}$ containing $j$ by $\mathscr{P}(j)$.
\end{definition}
Our goal is to connect
\begin{equation*}
\sum\limits_{(l_j)_{j\in \left [n \right]}}^* v_{1, l_1}\otimes \cdots \otimes v_{n, l_n}.
\end{equation*}
with summations over independent indices.
\begin{theorem}  \label{TheoremSection2AlgebraicIdentity1}
Given $n\in \mathbb{N}_+$, there exist constants $C_{\mathscr{P}} \in \mathbb{Z}$ associated with each partition $\mathscr{P}$ of $\left [n \right]$ such that for $n$ arbitrary vector spaces $V_j, j\in \left [n \right]$, over a field $\mathbb{F}$ and $L$ vectors $v_{j, l_j}, 1\leq l_j\leq L$, in each $V_j$ we have
\begin{equation}  \label{EquationSection2AlgebraicIdentity1}
\sum\limits_{(l_j)_{j\in \left [n \right]}}^* v_{1, l_1}\otimes \cdots \otimes v_{n, l_n}=\sum_{\mathscr{P}} C_{\mathscr{P}} \sum_{(l_P)_{P\in \mathscr{P}}} v_{1, l_{\mathscr{P}(1)}}\otimes \cdots \otimes v_{n, l_{\mathscr{P}(n)}}.
\end{equation}
Moreover, these $C_{\mathscr{P}}$'s satisfy
\begin{enumerate}
\item $C_{\big{\{} \{1\}, \cdots, \{n\} \big{\}}}=1$.
\item If $2\mid n$, $C_{\mathscr{P}}=(-1)^{\frac{n}{2}}$ for each $\mathscr{P}$ such that $\#P=2, \forall P\in \mathscr{P}$.
\end{enumerate}
\end{theorem}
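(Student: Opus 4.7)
The plan is to recognize \eqref{EquationSection2AlgebraicIdentity1} as a Möbius inversion over the lattice of set partitions of $[n]$ ordered by refinement. Write $\mathscr{Q}_0 := \{\{1\}, \ldots, \{n\}\}$ for the finest partition and $\mathscr{R} \succeq \mathscr{Q}$ for ``$\mathscr{R}$ is coarser than $\mathscr{Q}$.'' For each partition $\mathscr{R}$ of $[n]$ set
\begin{align*}
T_{\mathscr{R}} &:= \sum_{(l_P)_{P \in \mathscr{R}}} v_{1, l_{\mathscr{R}(1)}} \otimes \cdots \otimes v_{n, l_{\mathscr{R}(n)}}, \\
T_{\mathscr{R}}^{*} &:= \sum_{(l_P)_{P \in \mathscr{R}}}^{*} v_{1, l_{\mathscr{R}(1)}} \otimes \cdots \otimes v_{n, l_{\mathscr{R}(n)}},
\end{align*}
so that the left-hand side of \eqref{EquationSection2AlgebraicIdentity1} is exactly $T_{\mathscr{Q}_0}^{*}$, and the goal is to express it as an integer combination of the $T_{\mathscr{R}}$'s.

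The key combinatorial step is the bookkeeping identity
\begin{equation*}
T_{\mathscr{Q}} = \sum_{\mathscr{R} \succeq \mathscr{Q}} T_{\mathscr{R}}^{*},
\end{equation*}
valid for every partition $\mathscr{Q}$. An unrestricted tuple $(l_P)_{P \in \mathscr{Q}}$ contributing to $T_{\mathscr{Q}}$ induces a unique coarsening $\mathscr{R} \succeq \mathscr{Q}$ of $[n]$ by amalgamating those blocks of $\mathscr{Q}$ on which the labels happen to coincide; grouping the tuples according to the $\mathscr{R}$ they produce leaves, on each $\mathscr{R}$, a sum with pairwise distinct labels on the blocks, i.e.\ $T_{\mathscr{R}}^{*}$.

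Applying Möbius inversion on the partition lattice at $\mathscr{Q}=\mathscr{Q}_0$ (where every partition is coarser than $\mathscr{Q}_0$) yields
\begin{equation*}
T_{\mathscr{Q}_0}^{*} = \sum_{\mathscr{R}} \mu(\mathscr{Q}_0, \mathscr{R}) \, T_{\mathscr{R}},
\end{equation*}
which is \eqref{EquationSection2AlgebraicIdentity1} with $C_{\mathscr{R}} := \mu(\mathscr{Q}_0, \mathscr{R}) \in \mathbb{Z}$. Properties (1) and (2) then follow from the classical closed form
\begin{equation*}
\mu(\mathscr{Q}_0, \mathscr{R}) = \prod_{P \in \mathscr{R}} (-1)^{|P|-1}(|P|-1)!,
\end{equation*}
which gives $C_{\mathscr{Q}_0}=1$ and $C_{\mathscr{R}} = (-1)^{n/2}$ whenever every block of $\mathscr{R}$ has size two.

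The argument is essentially purely combinatorial, so the main obstacle is not conceptual but notational: making precise the bijection between unrestricted tuples on $\mathscr{Q}$ and pairs (coarsening $\mathscr{R} \succeq \mathscr{Q}$, distinct-label tuple on $\mathscr{R}$), and keeping straight which indices sit on tensor factors versus summation variables. A self-contained alternative avoiding any appeal to the partition lattice is induction on $n$: peel off $l_n$ via $\sum_{l_n}^{*} = \sum_{l_n} - \sum_{j<n} (\,\cdot\,)\big|_{l_n=l_j}$, apply the induction hypothesis for $n-1$ inside each of the $n$ resulting pieces, and verify that the coefficients so produced satisfy the standard recursion characterising $\mu(\mathscr{Q}_0, \cdot)$; properties (1) and (2) then drop out of the recursion directly.
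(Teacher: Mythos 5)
Your proof is correct, and it takes a genuinely different route from the paper. The paper argues by induction on $n$: it peels off the index $l_n$ via $\sum_{l_n}^{*}=\sum_{l_n}-\sum_{j_0<n}(\cdot)|_{l_n=l_{j_0}}$, handles the resulting coupled sums with a bespoke inclusion--exclusion lemma, and then only \emph{tracks} the coefficients $C_{\mathscr{P}}$ through the recursion for the two special classes of partitions named in (1) and (2) --- this is essentially the ``self-contained alternative'' you sketch at the end. Your primary argument instead identifies the relation $T_{\mathscr{Q}}=\sum_{\mathscr{R}\succeq\mathscr{Q}}T^{*}_{\mathscr{R}}$ (which is correct: grouping unrestricted block-labelings by their coincidence pattern is a bijection onto pairs of a coarsening and a distinct labeling, and the tensors agree under this correspondence) and inverts it on the partition lattice; since M\"obius inversion is a finite $\mathbb{Z}$-linear identity, it applies verbatim to functions valued in a vector space over any field, with integer multiples read as iterated sums as the paper stipulates. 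What your approach buys is strictly more information: the classical closed form $\mu(\mathscr{Q}_0,\mathscr{R})=\prod_{P\in\mathscr{R}}(-1)^{\#P-1}(\#P-1)!$ determines \emph{every} coefficient $C_{\mathscr{P}}$ explicitly, from which (1) and (2) are immediate one-line checks, whereas the paper's induction leaves the remaining coefficients implicit. The trade-off is that you import two standard facts about the partition lattice (the inversion formula and its M\"obius function), while the paper's argument is self-contained; if you want to keep your route but avoid the citation, you would need to supply a short proof of the closed form for $\mu(\mathscr{Q}_0,\cdot)$, or at least verify the two values needed for (1) and (2) directly from the defining recursion of $\mu$.
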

\begin{remark}
For a vector $w$, $Cw, C\in \mathbb{Z}$, is defined via summation here.
\end{remark}

The following lemma is akin to the inclusion–exclusion principle.

\begin{lemma}  \label{LemmaSection2Inclusion–ExclusionPrinciple}
Let $A$ be a finite set, $W$ be a vector space, and $\phi, \psi: 2^A \rightarrow W$ be two arbitrary maps such that $\psi(\emptyset)=\phi(\emptyset)$. If for any $J\subset A$ we have
\begin{equation}  \label{EquationSection2Inclusion-ExclusionPrinciple}
\psi(J)=\phi(J)-\sum_{j\in J} \psi(J\setminus \{j\}),
\end{equation}
then
\begin{equation*}
\psi(A)=\sum_{J\subset A} (-1)^{\#J}(\#J)!\ \phi(J^c).
\end{equation*}
\end{lemma}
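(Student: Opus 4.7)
The strategy is induction on $n := \#A$. The base case $n = 0$ is immediate: the only $J \subset \emptyset$ is $J = \emptyset$ itself, and the right-hand side collapses to $(-1)^{0}\cdot 0!\cdot \phi(\emptyset) = \psi(\emptyset)$ by the assumed consistency condition.

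For the inductive step, I would apply the recurrence \eqref{EquationSection2Inclusion-ExclusionPrinciple} once at $J = A$, giving $\psi(A) = \phi(A) - \sum_{j \in A} \psi(A \setminus \{j\})$, and then expand each $\psi(A \setminus \{j\})$ using the inductive hypothesis on the universe $A \setminus \{j\}$:
\begin{equation*}
\psi(A \setminus \{j\}) = \sum_{J \subset A \setminus \{j\}} (-1)^{\#J}(\#J)!\, \phi\bigl(A \setminus (J \cup \{j\})\bigr).
\end{equation*}
The key combinatorial move is the reindexing $K := J \cup \{j\}$: as the pair $(j, J)$ ranges over $j \in A$ and $J \subset A \setminus \{j\}$, the set $K$ ranges over all nonempty subsets of $A$, each such $K$ being hit exactly $\#K$ times (once for each choice of $j \in K$), with $\#J = \#K - 1$. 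This multiplicity is precisely what promotes the factorial $(\#K - 1)!$ to $\#K \cdot (\#K - 1)! = (\#K)!$ and so matches the factorial in the target formula.

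Putting the pieces together yields
\begin{equation*}
\sum_{j \in A} \psi(A \setminus \{j\}) = \sum_{\emptyset \neq K \subset A} (-1)^{\#K - 1}(\#K)!\, \phi(A \setminus K),
\end{equation*}
and substituting into $\psi(A) = \phi(A) - \sum_{j \in A} \psi(A \setminus \{j\})$, while observing that $\phi(A)$ coincides with the $K = \emptyset$ summand of the desired expression, delivers the stated identity. The main obstacle is purely bookkeeping: one must track the sign coming from the recurrence against the signs produced by the inductive hypothesis, and correctly absorb the multiplicity $\#K$ into the factorial after reindexing. A cleaner, non-inductive variant would be to iterate the recurrence all the way down to $\psi(\emptyset)$, which produces an alternating sum over ordered tuples of distinct elements of $A$; grouping by the underlying unordered set then yields the factor $(\#J)!$ directly. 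I would prefer the inductive presentation for compactness.
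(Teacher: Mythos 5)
Your proposal is correct and is essentially identical to the paper's own proof: induction on $\#A$, one application of the recurrence at $J=A$, the inductive hypothesis on each $A\setminus\{j\}$, and the reindexing $K=J\cup\{j\}$ with multiplicity $\#K$ absorbing into the factorial. No issues.
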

\begin{proof}
The proof is by induction on $\# A$. If $\# A=0$, the proposition follows from the assumption $\phi(\emptyset)=\psi(\emptyset)$. Now assume $\# A\geq 1$ and the proposition holds for any $A^{'}: \# A^{'}<\# A$. We show it also holds for $A$. Write
\begin{equation*}
\psi(A)=\phi(A)-\sum_{j\in A} \psi(A\setminus \{j\})
\end{equation*}
using \eqref{EquationSection2Inclusion-ExclusionPrinciple}.
For each $j$, apply the induction hypothesis to $A\setminus \{j\}$. We get
\begin{equation*}
\psi(A)=\phi(A)-\sum_{j\in A}\sum_{J^{'}\subset A\setminus \{j\}} (-1)^{\#J^{'}}(\#J^{'})!\ \phi \big((A\setminus \{j\})\setminus J^{'} \big).
\end{equation*}
Let $J=J^{'}\cup \{j\}$. Note that for each $J\subset A$, there are exactly $\# J$ pairs $(j, J^{'})$ satisfying $J=J^{'}\cup \{j\}$. Moreover, $\phi(A)$ corresponds to the term of $\emptyset$. This completes the proof.
\end{proof}

Below we show Theorem \ref{TheoremSection2AlgebraicIdentity1}.
\begin{proof}
We prove by induction on $n$. If $n=1$, both sides of \eqref{EquationSection2AlgebraicIdentity1} are
\begin{equation*}
\sum_{l} v_{1, l}.
\end{equation*}
For the $n=2$ case, we have
\begin{equation*}
\sum_{(l_j)_{j\in \left [2 \right]}}^* v_{1, l_1}\otimes v_{2, l_2}=\sum_{(l_j)_{j\in \left [2 \right]}} v_{1, l_1}\otimes v_{2, l_2}-\sum_{l} v_{1, l}\otimes v_{2, l}. 
\end{equation*}
as desired. Now assume $n\geq 3$ and the proposition holds for any $n^{'}: n^{'}<n$. We show it also holds for $n$. Let's first prove the existence of $C_{\mathscr{P}}$'s. Write
\begin{equation}  \label{EquationSection2Proof1}
\begin{split}
\sum\limits_{(l_j)_{j\in \left [n \right]}}^* v_{1, l_1}\otimes \cdots \otimes v_{n, l_n}=\sum\limits_{(l_j)_{j\in \left [n-1 \right]}}^* & v_{1, l_1}\otimes \cdots \otimes v_{n-1, l_{n-1}}\otimes S_n  \\
& -\sum_{j_0\in \left[n-1 \right]} \sum\limits_{(l_j)_{j\in \left [n-1 \right]}}^* v_{1, l_1}\otimes \cdots \otimes v_{n-1, l_{n-1}}\otimes v_{n, l_{j_0}}.
\end{split}
\end{equation}
For the first term on the right of \eqref{EquationSection2Proof1}, apply the induction hypothesis to
\begin{equation*}
\sum_{(l_j)_{j\in \left [n-1 \right]}}^* v_{1, l_1}\otimes \cdots \otimes v_{n-1, l_{n-1}},
\end{equation*}
which can be written into
\begin{equation}  \label{EquationSection2Proof2}
\sum_{\mathscr{P}^{'}} C_{\mathscr{P}^{'}}^{'} \sum_{(l_{P^{'}})_{P^{'}\in \mathscr{P}^{'}}} v_{1, l_{\mathscr{P}^{'}(1)}}\otimes \cdots \otimes v_{n-1, l_{\mathscr{P}^{'}(n-1)}}.
\end{equation}
Let $\mathscr{P}:=\mathscr{P}^{'}\cup \big{\{}\{n\} \big{\}}$ for each $\mathscr{P}^{'}$. We have
\begin{equation*}
\sum_{(l_{P^{'}})_{P^{'}\in \mathscr{P}^{'}}} v_{1, l_{\mathscr{P}^{'}(1)}}\otimes \cdots \otimes v_{n-1, l_{\mathscr{P}^{'}(n-1)}}\otimes S_n=\sum_{(l_P)_{P\in \mathscr{P}}} v_{1, l_{\mathscr{P}(1)}}\otimes \cdots \otimes v_{n, l_{\mathscr{P}(n)}}.
\end{equation*}
This is an allowed term of the right-hand side of \eqref{EquationSection2AlgebraicIdentity1} such that $\mathscr{P}(n)=\{n\}$.

Now consider the second term on the right of \eqref{EquationSection2Proof1}. For each $j_0\in \left [n-1 \right]$, we will show
\begin{equation*}
\sum\limits_{(l_j)_{j\in \left [n-1 \right]}}^* v_{1, l_1}\otimes \cdots \otimes v_{n-1, l_{n-1}}\otimes v_{n, l_{j_0}}
\end{equation*}
can be expressed as the right-hand side of \eqref{EquationSection2AlgebraicIdentity1}. It suffices to tackle the $j_0=n-1$ case since $V_1\otimes \cdots \otimes V_{n-1} \cong V_{\sigma^{'}(1)}\otimes \cdots \otimes V_{\sigma^{'}(n-1)}$ for any permutation $\sigma^{'}$ on $\left [n-1 \right]$. Write
\begin{equation}  \label{EquationSection2Proof3}
\begin{split}
\sum\limits_{(l_j)_{j\in \left [n-1 \right]}}^* & v_{1, l_1}\otimes \cdots \otimes v_{n-1, l_{n-1}}\otimes v_{n, l_{n-1}}  \\
& =\sum_{l_{n-1}}\big( \sum_{(l_j)_{j\in \left [n-2 \right]}\in (\{l_{n-1}\}^c)^{\left[ n-2 \right]}}^* v_{1, l_1}\otimes \cdots \otimes v_{n-2, l_{n-2}}\big)\otimes (v_{n-1, l_{n-1}}\otimes v_{n, l_{n-1}}).
\end{split}
\end{equation}
We eliminate the dependence between $l_{n-1}$ and other indices by applying Lemma \ref{LemmaSection2Inclusion–ExclusionPrinciple}. Given $J\subset \left[ n-2\right], j\in \left[ n-2\right]$, define $\mathbi{l}_{J, j}: \{l: 1\leq l\leq L\}^2 \rightarrow \{l: 1\leq l\leq L\}$ as follows:
\begin{equation*}
\mathbi{l}_{J, j}(k, m):=\begin{cases}
k,\ j\in J, \\
m,\ j\in J^c.
\end{cases}
\end{equation*}
Take
\begin{equation*}
\phi(\emptyset)=\psi(\emptyset):=v_{1, l_{n-1}}\otimes \cdots \otimes v_{n-2, l_{n-1}},
\end{equation*}
and
\begin{align*}
\phi(J) & :=\sum_{(l_j)_{j\in J}}^* v_{1, \mathbi{l}_{J, 1}(l_1, l_{n-1})}\otimes \cdots \otimes v_{n-2, \mathbi{l}_{J, n-2}(l_{n-2}, l_{n-1})},  \\
\psi(J) & :=\sum_{(l_j)_{j\in J}\in (\{l_{n-1}\}^c)^J}^* v_{1, \mathbi{l}_{J, 1}(l_1, l_{n-1})}\otimes \cdots \otimes v_{n-2, \mathbi{l}_{J, n-2}(l_{n-2}, l_{n-1})},
\end{align*}
for $J\neq \emptyset$. Then $\phi, \psi: 2^{\left[ n-2 \right]}\rightarrow V_1\otimes \cdots \otimes V_{n-2}$, satisfy \eqref{EquationSection2Inclusion-ExclusionPrinciple}. Applying Lemma \ref{LemmaSection2Inclusion–ExclusionPrinciple} gives
\begin{equation}  \label{EquationSection2Proof4}
\begin{split}
\sum_{(l_j)_{j\in \left [n-2 \right]}\in (\{l_{n-1}\}^c)^{\left[ n-2 \right]}}^* & v_{1, l_1}\otimes \cdots \otimes v_{n-2, l_{n-2}}  \\
& =\sum_{J\subset \left[n-2 \right]} (-1)^{\#J}(\#J)!\ \sum_{(l_j)_{j\in J^c}}^* v_{1, \mathbi{l}_{J^c, 1}(l_1, l_{n-1})}\otimes \cdots \otimes v_{n-2, \mathbi{l}_{J^c, n-2}(l_{n-2}, l_{n-1})}.
\end{split}
\end{equation}
Since $V_1\otimes \cdots \otimes V_{n-2} \cong V_{\sigma^{''}(1)}\otimes \cdots \otimes V_{\sigma^{''}(n-2)}$ for any permutation $\sigma^{''}$ on $\left [n-2 \right]$, we can apply the induction hypothesis to
\begin{equation*}
\sum_{(l_j)_{j\in J^c}}^* (\bigotimes_{j\in J^c} v_{j, l_j})
\end{equation*}
and write it as
\begin{equation}  \label{EquationSection2Proof5}
\sum_{\mathscr{P}^{''}} C_{\mathscr{P}^{''}}^{''} \sum_{(l_{P^{''}})_{P^{''}\in \mathscr{P}^{''}}} (\bigotimes_{j\in J^c} v_{j, l_{\mathscr{P}^{''}(j)}}).
\end{equation}
Plug \eqref{EquationSection2Proof5} into \eqref{EquationSection2Proof4}, and then plug \eqref{EquationSection2Proof4} into \eqref{EquationSection2Proof3}. After changing the summation order, \eqref{EquationSection2Proof3} is expressed as
\begin{equation*}
\sum_{J\subset \left[n-2 \right]} (-1)^{\#J}(\#J)!\ \sum_{\mathscr{P}^{''}} C_{\mathscr{P}^{''}}^{''} \Box_{J, \mathscr{P}^{''}},
\end{equation*}
where $\Box_{J, \mathscr{P}^{''}}$ represents the following:
\begin{equation*}
\big( \sum_{l_{n-1}} \sum_{(l_{P^{''}})_{P^{''}\in \mathscr{P}^{''}}} v_{1, \mathbi{l}_{J^c, 1}(l_{\mathscr{P}^{''}(1)}, l_{n-1})}\otimes \cdots \otimes v_{n-2, \mathbi{l}_{J^c, n-2}(l_{\mathscr{P}^{''}(n-2)}, l_{n-1})} \big) \otimes (v_{n-1, l_{n-1}}\otimes v_{n, l_{n-1}}).
\end{equation*}
It suffices to show that $\Box_{J, \mathscr{P}^{''}}$ can be expressed as the right-hand side of \eqref{EquationSection2AlgebraicIdentity1} for each pair $(J, \mathscr{P}^{''})$. Let $\mathscr{P}:=\mathscr{P}^{''}\bigcup \big\{ J\cup \{n-1,n\} \big\}$. We see
\begin{equation}  \label{EquationSection2Proof6}
\Box_{J, \mathscr{P}^{''}}=\sum_{(l_P)_{P\in \mathscr{P}}}v_{1, l_{\mathscr{P}(1)}}\otimes \cdots \otimes v_{n, l_{\mathscr{P}(n)}}.
\end{equation}
So we obtain the existence of $C_{\mathscr{P}}$'s.

We can track $C_{\mathscr{P}}$'s as follows: For $\mathscr{P}=\big{\{} \{1\}, \cdots, \{n\} \big{\}}$, it only appears when expressing the first term on the right of \eqref{EquationSection2Proof1} since every $\mathscr{P}$ from the second term contains $\{j_0, n\}$ for some $j_0\in \left[ n-1\right]$. So we get $C_{\big{\{} \{1\}, \cdots, \{n\} \big{\}}}=C_{\big{\{} \{1\}, \cdots, \{n-1\} \big{\}}}^{'}=1$ in \eqref{EquationSection2Proof2}. Now assume $2\mid n$. For each $\mathscr{P}$ such that $\#P=2, \forall P\in \mathscr{P}$, it only appears when expressing the second term on the right of \eqref{EquationSection2Proof1} since every $\mathscr{P}$ from the first term contains $\{n\}$. Moreover, there exists exactly one $j_0$ such that $\{j_0, n\}\in \mathscr{P}$. This implies $\mathscr{P}$ can only appear in the expression of
\begin{equation*}
\sum_{(l_j)_{j\in \left [n-1 \right]}}^* v_{1, l_1}\otimes \cdots \otimes v_{n-1, l_{n-1}}\otimes v_{n, l_{j_0}}.
\end{equation*}
According to our induction process, we can assume $j_0=n-1$ without loss of generality. Then the right hand side of \eqref{EquationSection2Proof6} corresponds to $\mathscr{P}$ exactly when $(J, \mathscr{P}^{''})=(\emptyset, \mathscr{P}\backslash \big{\{} \{n-1, n\}\big{\}})$. Thus $C_{\mathscr{P}}=-C_{\mathscr{P}\backslash \big{\{} \{n-1, n\}\big{\}}}^{''}=(-1)^{\frac{n}{2}}$, which completes the proof.
\end{proof}

Equivalently, we have the identity for general multilinear maps.
\begin{corollary}  \label{CorollarySection2AlgebraicIdentity2}
Given $n\in \mathbb{N}_+$, let $V_j, j\in \left [n \right]$, be $n$ arbitrary vector spaces over a field $\mathbb{F}$. For each $j$, let $v_{j, l_j}, 1\leq l_j\leq L$, be $L$ vectors in $V_j$. Then for any vector space $W$ and any $n\text{-linear}$ map $\Lambda: V_1\times \cdots \times V_n\rightarrow W$, we have
\begin{equation}\label{formidentity}
\sum_{(l_j)_{j\in \left [n \right]}}^* \Lambda(v_{1, l_1}, \cdots, v_{n, l_n})=\sum_{\mathscr{P}} C_{\mathscr{P}} \sum_{(l_P)_{P\in \mathscr{P}}} \Lambda(v_{1, l_{\mathscr{P}(1)}}, \cdots, v_{n, l_{\mathscr{P}(n)}}),
\end{equation}
where $C_{\mathscr{P}}$'s are the same constants in Theorem \ref{TheoremSection2AlgebraicIdentity1}.
\end{corollary}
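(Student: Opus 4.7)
The plan is to reduce Corollary \ref{CorollarySection2AlgebraicIdentity2} to Theorem \ref{TheoremSection2AlgebraicIdentity1} via the universal property of the tensor product. Given an $n$-linear map $\Lambda\colon V_1\times\cdots\times V_n\to W$, the universal property produces a unique linear map $\widetilde{\Lambda}\colon V_1\otimes\cdots\otimes V_n\to W$ satisfying $\widetilde{\Lambda}(v_1\otimes\cdots\otimes v_n)=\Lambda(v_1,\ldots,v_n)$ on elementary tensors, and hence on all finite sums of elementary tensors by linearity.

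Next, I would apply Theorem \ref{TheoremSection2AlgebraicIdentity1} to the vectors $v_{j,l_j}\in V_j$ to obtain the identity
\begin{equation*}
\sum_{(l_j)_{j\in [n]}}^{*} v_{1,l_1}\otimes\cdots\otimes v_{n,l_n}=\sum_{\mathscr{P}}C_{\mathscr{P}}\sum_{(l_P)_{P\in\mathscr{P}}} v_{1,l_{\mathscr{P}(1)}}\otimes\cdots\otimes v_{n,l_{\mathscr{P}(n)}}
\end{equation*}
in $V_1\otimes\cdots\otimes V_n$, with the very same integer constants $C_{\mathscr{P}}$. Both sides are finite sums of elementary tensors, so $\widetilde{\Lambda}$ may be applied term-by-term.

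Finally, applying $\widetilde{\Lambda}$ to both sides and using its linearity together with the defining relation $\widetilde{\Lambda}(v_{1,l_{\mathscr{P}(1)}}\otimes\cdots\otimes v_{n,l_{\mathscr{P}(n)}})=\Lambda(v_{1,l_{\mathscr{P}(1)}},\ldots,v_{n,l_{\mathscr{P}(n)}})$ yields exactly the desired identity \eqref{formidentity}. Since the constants $C_{\mathscr{P}}$ depend only on $n$ and the combinatorics of the partitions of $[n]$, and not on $V_j$, $W$, or $\Lambda$, they transfer unchanged.

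There is essentially no obstacle here; the only minor point worth noting in the write-up is that the constants $C_{\mathscr{P}}\in\mathbb{Z}$ act on vectors in $W$ via repeated addition or subtraction (as remarked after Theorem \ref{TheoremSection2AlgebraicIdentity1}), so the equality makes sense in $W$ regardless of whether $\mathbb{F}$ admits $\mathbb{Z}$ as a subring. This makes the corollary a purely formal consequence of the tensor-product identity.
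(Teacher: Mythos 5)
Your proposal is correct and is exactly the argument the paper intends (the paper states the corollary as an immediate equivalent of Theorem \ref{TheoremSection2AlgebraicIdentity1} without writing out the details): factor $\Lambda$ through the tensor product via the universal property and apply the resulting linear map to both sides of \eqref{EquationSection2AlgebraicIdentity1}. Your remark that the integer constants $C_{\mathscr{P}}$ act by repeated addition and hence commute with the induced linear map is the right point to make explicit.
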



\section{Sufficiency of Type \texorpdfstring{\uppercase\expandafter{\romannumeral4} Superorthogonality}{IV Superorthogonality}}  \label{Section3}
Given a vector space $V$ over $\mathbb{R}$ and a positive bilinear form $\mathfrak{B}$, we can replace $\mathfrak{B}$ with its symmetrization 
\begin{equation*}
\mathfrak{B}_{sym}(v_1, v_2):=\frac{1}{2}\big( \mathfrak{B}(v_1, v_2)+\mathfrak{B}(v_2, v_1) \big),\ \forall v_1, v_2\in V.
\end{equation*}
The conditions of Theorem \ref{TheoremSection1DirectInequality} and Theorem \ref{TheoremSection1ConverseInequality} still hold for $\mathfrak{B}_{sym}$, and the conclusions are invariant. So we assume $\mathfrak{B}$ is symmetric from now on.

For any $v_1, v_2\in V$, expanding $\mathfrak{B}(v_1+tv_2, v_1+tv_2)\geq 0, \forall t\in \mathbb{R}$, gives $\mathfrak{B}(v_1, v_2)\in \mathbb{R}$ and the Cauchy–Schwarz inequality:
\begin{equation}  \label{EquationSection3Cauchy–Schwarz}
|\mathfrak{B}(v_1, v_2)|\leq \mathfrak{B}(v_1)\mathfrak{B}(v_2).
\end{equation}
From now on, we fix $\Lambda$ to be the following $2r\text{-linear}$ map:
\begin{equation}  \label{EquationSection3DefiningLambda}
\Lambda(v_1, \cdots, v_{2r}):=\mathfrak{B}(v_1, v_2)\cdots \mathfrak{B}(v_{2r-1}, v_{2r}),\ \forall v_1, \cdots, v_{2r}\in V
\end{equation}
and use the notation:
\begin{equation*}
\Lambda(v):=\Lambda(v, \cdots, v)^{\frac{1}{2r}}=\mathfrak{B}(v, v)^{\frac{1}{2}}=\mathfrak{B}(v), \forall v\in V.
\end{equation*}
Note that \eqref{EquationSection3Cauchy–Schwarz} implies
\begin{equation}  \label{EquationSection3Log-Convexity}
|\Lambda(v_1, \cdots, v_{2r})|\leq \Lambda(v_1)\cdots \Lambda(v_{2r}),\ \forall v_1, \cdots, v_{2r}\in V.
\end{equation}

\subsection{Proof of Theorem \ref{TheoremSection1DirectInequality}}  \label{Subsection3.1}
Applying Corollary \ref{CorollarySection2AlgebraicIdentity2} with $n=2r, V_j=V, v_{j, l_j}=f_{l_j}, j\in \left[2r \right]$ and $\Lambda$ defined in \eqref{EquationSection3DefiningLambda} gives
\begin{equation}  \label{EquationSection3DirectInequality1}
\begin{split}
\sum_{(l_j)_{j\in \left [2r \right]}}^* \Lambda(f_{l_1}, \cdots, f_{l_{2r}}) & =\sum_{\mathscr{P}} C_{\mathscr{P}} \sum_{(l_P)_{P\in \mathscr{P}}} \Lambda(f_{l_{\mathscr{P}(1)}}, \cdots, f_{l_{\mathscr{P}(2r)}})  \\
& =\Lambda(\sum_{l} f_l)^{2r}+\sum_{\mathscr{P}\neq \big{\{} \{1\}, \cdots, \{2r\} \big{\}}} C_{\mathscr{P}} \sum_{(l_P)_{P\in \mathscr{P}}} \Lambda(f_{l_{\mathscr{P}(1)}}, \cdots, f_{l_{\mathscr{P}(2r)}}).  
\end{split}
\end{equation}
Integrate both sides of \eqref{EquationSection3DirectInequality1} over $X$. The vanishing condition \eqref{EquationSection1Superorthogonality} implies
\begin{equation}  \label{EquationSection3DirectInequality2}
\int_X \sum_{(l_j)_{j\in \left [2r \right]}}^* \Lambda(f_{l_1}, \cdots, f_{l_{2r}}) d\mu=0.  
\end{equation}
Hence the triangle inequality implies
\begin{equation}  \label{EquationSection3DirectInequality3}
\int_X \Lambda(\sum_{l} f_l)^{2r} d\mu \leq \sum_{\mathscr{P}\neq \big{\{} \{1\}, \cdots, \{2r\} \big{\}}} |C_{\mathscr{P}}| \big( \int_X |\sum_{(l_P)_{P\in \mathscr{P}}} \Lambda(f_{l_{\mathscr{P}(1)}}, \cdots, f_{l_{\mathscr{P}(2r)}})| d\mu \big).    
\end{equation}

For any partition $\mathscr{P}$, we have
\begin{equation}  \label{EquationSection3DirectInequality4}
|\sum_{(l_P)_{P\in \mathscr{P}}} \Lambda(f_{l_{\mathscr{P}(1)}}, \cdots, f_{l_{\mathscr{P}(2r)}})|\leq \Lambda(\sum_{l} f_l)^{\# \{P: \#P=1\}} \prod_{P: \#P\geq 2} \big(\sum_l \Lambda(f_l)^{\#P} \big)
\end{equation}
by first summing over $l_j$ such that $j\in P: \#P=1$, then applying \eqref{EquationSection3Log-Convexity} to each term, and finally summing over $(l_P)_{P: \#P\geq 2}$ and via the distributive law. Now we successively apply \eqref{EquationSection3DirectInequality4}, monotonicity of $l^p$ norms, and H\"older's inequality:
\begin{equation}  \label{EquationSection3DirectInequality5}
\begin{split}
\int_X |\sum_{(l_P)_{P\in \mathscr{P}}} & \Lambda(f_{l_{\mathscr{P}(1)}}, \cdots, f_{l_{\mathscr{P}(2r)}})| d\mu  \\
& \leq\int_X \Lambda(\sum_{l} f_l)^{\# \{P: \#P=1\}} \prod_{P: \#P\geq 2} \big(\sum_l \Lambda(f_l)^{\#P} \big) d\mu  \\
& \leq\int_X \Lambda(\sum_{l} f_l)^{\# \{P: \#P=1\}} \big(\sum_l \Lambda(f_{l})^{2} \big)^{\frac{2r-\# \{P: \#P=1\}}{2}} d\mu  \\
& \leq \big(\int_X \Lambda(\sum_{l} f_l)^{2r} d\mu \big)^{\frac{\# \{P: \#P=1\}}{2r}} \Big(\int_X \big(\sum_l \Lambda(f_{l})^{2} \big)^{r}  d\mu \Big)^{\frac{2r-\# \{P: \#P=1\}}{2r}}.
\end{split}
\end{equation}

Plug \eqref{EquationSection3DirectInequality5} into the right-hand side of \eqref{EquationSection3DirectInequality3}. Divide both sides by $\|\big( \sum\limits_{l=1}\Lambda(f_l)^2 \big)^{\frac{1}{2}}\|_{2r}^{2r}$ and let
\begin{equation*}
t:=\frac{\|\Lambda(\sum\limits_{l=1} f_l)\|_{2r}}{\|\big( \sum\limits_{l=1}\Lambda(f_l)^2 \big)^{\frac{1}{2}}\|_{2r}}.    
\end{equation*}
We obtain
\begin{equation*}
t^{2r}\leq Q(t)    
\end{equation*}
for some polynomial $Q$ such that $deg(Q)\leq 2r-2$ since $\mathscr{P}\neq \big{\{} \{1\}, \cdots, \{2r\} \big{\}}$ on the right of \eqref{EquationSection3DirectInequality3}. This implies $t\lesssim_r 1$ and completes the proof.


\subsection{Proof of Theorem \ref{TheoremSection1ConverseInequality}}  \label{Subsection3.2}
For the converse inequality, we need to explore the positivity of certain summations.
\begin{lemma}  \label{LemmaSection3Positivity}
Let $v_l, 1\leq l\leq L$ be vectors in $V$. Then for any partition $\mathscr{P}$ such that $\#P=2, \forall P\in \mathscr{P}$ we have
\begin{equation*}
\sum_{(l_P)_{P\in \mathscr{P}}} \Lambda(v_{l_{\mathscr{P}(1)}}, \cdots, v_{l_{\mathscr{P}(2r)}})\geq 0.
\end{equation*}
\end{lemma}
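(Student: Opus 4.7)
The plan is to recognize the indicated sum as a product of traces of powers of a Gram-type matrix, and then invoke positive semidefiniteness.

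First, I associate to $\mathscr{P}$ a multigraph $\Gamma$ whose vertex set is $\mathscr{P}$ and whose $r$ edges are labeled by the factor-slots $\{2k-1, 2k\}$, $k\in [r]$: edge $k$ joins the blocks $\mathscr{P}(2k-1)$ and $\mathscr{P}(2k)$, becoming a self-loop if these coincide. Because each block $P\in\mathscr{P}$ has exactly two elements and each element of $[2r]$ belongs to a unique factor-slot, every vertex of $\Gamma$ has degree $2$. Hence $\Gamma$ is a disjoint union of cycles, with loops counted as length-$1$ cycles and repeated edges as length-$2$ cycles.

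Next, since the summation variables $l_P$ attached to different cycle components of $\Gamma$ are independent and the integrand factorizes according to which cycle each edge belongs to, the original sum splits as a product over cycle components. For a cycle component of length $c$ visiting blocks $P_1,\dots,P_c$ in order, using symmetry of $\mathfrak{B}$ to free the order within each factor, its contribution is
$$\sum_{l_1,\dots,l_c=1}^{L} G_{l_1 l_2} G_{l_2 l_3}\cdots G_{l_c l_1} = \mathrm{tr}(G^c),$$
where $G\in\mathbb{R}^{L\times L}$ is the matrix with entries $G_{ij}:=\mathfrak{B}(v_i, v_j)$.

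Finally, $G$ is symmetric (by the standing symmetry reduction on $\mathfrak{B}$) and positive semidefinite, since for any scalars $c_1,\dots,c_L$ one has $\sum_{i,j} c_i c_j G_{ij} = \mathfrak{B}(\sum_i c_i v_i, \sum_i c_i v_i) \geq 0$. Its eigenvalues $\lambda_i$ are therefore nonnegative, so $\mathrm{tr}(G^c) = \sum_i \lambda_i^c \geq 0$ for every $c\geq 1$, and a product of nonnegative reals is nonnegative. The only slightly subtle step is the combinatorial bookkeeping identifying $\Gamma$ as $2$-regular and thus a union of cycles; once this cycle decomposition is established, positivity follows immediately from elementary linear algebra.
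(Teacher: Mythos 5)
Your proof is correct, and its first half --- building the $2$-regular multigraph on the blocks of $\mathscr{P}$ and decomposing it into cycles so that the sum factors over cycle components --- is exactly the paper's argument. Where you diverge is the endgame: the paper proves nonnegativity of each cyclic sum $\sum_{l_1,\dots,l_c}\mathfrak{B}(v_{l_1},v_{l_2})\cdots\mathfrak{B}(v_{l_c},v_{l_1})$ by ``folding'' the cycle along an axis of symmetry, writing it for even $c=2m$ as a sum of squares $\sum_{l_1,l_{m+1}}\bigl(\sum_{l_2,\dots,l_m}\cdots\bigr)^2$ and for odd $c=2m-1$ as $\sum_{l_1}\mathfrak{B}(w_{l_1},w_{l_1})$ for suitable vectors $w_{l_1}\in V$, whereas you identify the cyclic sum as $\mathrm{tr}(G^{c})$ for the Gram matrix $G_{ij}=\mathfrak{B}(v_i,v_j)$ and invoke the spectral theorem for real symmetric positive semidefinite matrices. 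Both are complete; yours is arguably the more standard linear-algebra packaging and handles odd and even cycle lengths uniformly, while the paper's folding is self-contained (it uses only the positivity of $\mathfrak{B}$ and real-valuedness, never diagonalization) and is really the same inequality in disguise, since folding an even cycle is $\mathrm{tr}(G^{2m})=\|G^m\|_{F}^2$ and folding an odd cycle is $\mathrm{tr}(G^{2m-1})=\sum_{l}\mathfrak{B}(w_l,w_l)$. One small point worth making explicit in your write-up: the real-valuedness and symmetry of $G$ rest on the standing reduction at the start of Section 3 (symmetrization of $\mathfrak{B}$ and the expansion of $\mathfrak{B}(v_1+tv_2,v_1+tv_2)\geq 0$), which you correctly cite but should keep in view since $\mathfrak{B}$ is a priori $\mathbb{C}$-valued.
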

\begin{proof}
Fix such a partition $\mathscr{P}$. We can construct a graph $G$ as follows:
\begin{enumerate}
\item Each $P\in \mathscr{P}$ is a vertex of $G$.
\item For any $P, P^{'}\in G$, add an edge connecting them if there exists $j: 1\leq j\leq r$ such that $\mathscr{P}(2j-1)=P, \mathscr{P}(2j)=P^{'}$.
\end{enumerate}
Note that we allow loops and multiple edges. Each vertex of $G$ has degree $2$, so $G$ is the union of disjoint circles $\{\mathscr{C}_k\}_k$. Let $\#_k:=V(\mathscr{C}_k)=E(\mathscr{C}_k)$. We can then write
\begin{equation*}
\begin{split}
\sum_{(l_P)_{P\in \mathscr{P}}} \Lambda(v_{l_{\mathscr{P}(1)}}, \cdots, v_{l_{\mathscr{P}(2r)}})=\prod_{k} \big( \sum_{l_1, \cdots, l_{\#_k}} \mathfrak{B}(v_{l_1}, v_{l_2})\cdots \mathfrak{B}(v_{l_{\#_k}}, v_{l_1}) \big).
\end{split}
\end{equation*}
So it suffices to show
\begin{equation*}
\sum_{l_1, \cdots, l_{\#}} \mathfrak{B}(v_{l_1}, v_{l_2})\cdots \mathfrak{B}(v_{l_{\#}}, v_{l_1})\geq 0
\end{equation*}
for $\forall \#\geq 1$.

If $\#$ equals to $1$ or $2$, the positivity follows immediately from the expression. We only need to handle the $\#\geq 3$ case. This is done via fixing the circle's axis of symmetry and folding two halves under other summations. See Figure \ref{Folding the circles}.
\begin{figure}  
\centering
\includegraphics[width=0.5\textwidth,height=0.3\textwidth]{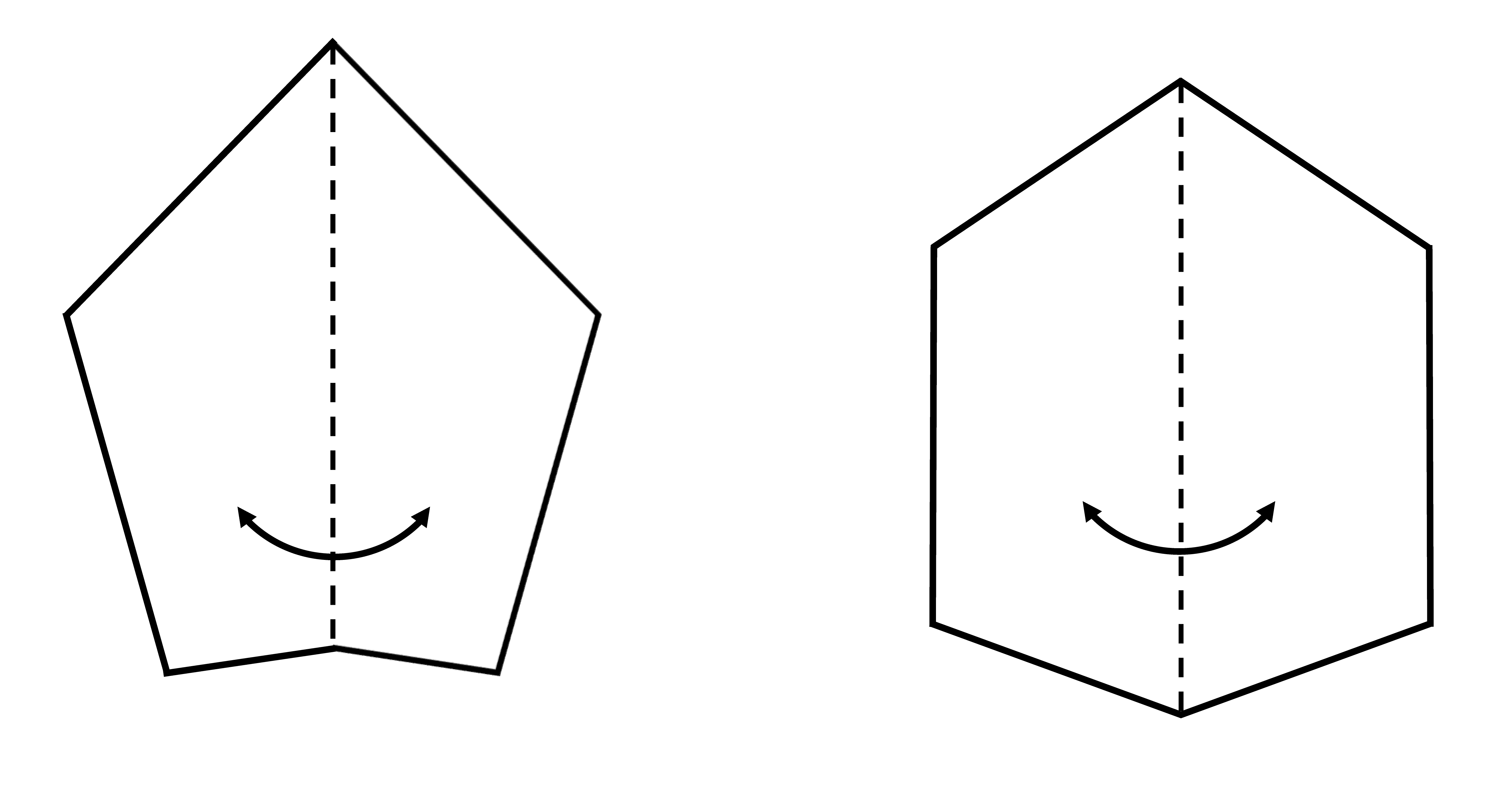}
\caption{Folding the circles}
\label{Folding the circles}
\end{figure}
More precisely, for odd $\#=2m-1$ we have
\begin{equation*}
\begin{split}
\sum_{l_1, \cdots, l_{2m-1}} & \mathfrak{B}(v_{l_1}, v_{l_2})\cdots \mathfrak{B}(v_{l_{2m-1}}, v_{l_1})  \\
& = \sum_{l_1} \mathfrak{B}\big( \sum_{l_2, \cdots, l_m} \mathfrak{B}(v_{l_1}, v_{l_2})\cdots \mathfrak{B}(v_{l_{m-1}}, v_{l_m})v_{l_m}, \sum_{l_2, \cdots, l_m} \mathfrak{B}(v_{l_1}, v_{l_2})\cdots \mathfrak{B}(v_{l_{m-1}}, v_{l_m})v_{l_m}\big) \\
& \geq 0,
\end{split}
\end{equation*}
Here we used the fact that $\mathfrak{B}$ is positive. And for even $\#=2m$ we have
\begin{equation*}
\begin{split}
\sum_{l_1, \cdots, l_{2m}} & \mathfrak{B}(v_{l_1}, v_{l_2})\cdots \mathfrak{B}(v_{l_{2m}}, v_{l_1})  \\
& = \sum_{l_1, l_{m+1}} \big( \sum_{l_2, \cdots, l_m} \mathfrak{B}(v_{l_1}, v_{l_2})\cdots \mathfrak{B}(v_{l_m}, v_{l_{m+1}}) \big)^2 \\
& \geq 0,
\end{split}
\end{equation*}
since $\mathfrak{B}$ is real valued.
\end{proof}

We continue the proof of Theorem \ref{TheoremSection1ConverseInequality}. Recall the first line in \eqref{EquationSection3DirectInequality1}:
\begin{equation}  \label{EquationSection3ConverseInequality1}
\sum_{(l_j)_{j\in \left [2r \right]}}^* \Lambda(f_{l_1}, \cdots, f_{l_{2r}})=\sum_{\mathscr{P}} C_{\mathscr{P}} \sum_{(l_P)_{P\in \mathscr{P}}} \Lambda(f_{l_{\mathscr{P}(1)}}, \cdots, f_{l_{\mathscr{P}(2r)}}).    
\end{equation}
We classify all partitions into three types:
\begin{enumerate}
\item $\mathscr{P}$ is single if there exists $P\in \mathscr{P}$ such that $\#P=1$.
\item $\mathscr{P}$ is double if $\#P=2$ for any $P\in \mathscr{P}$.
\item $\mathscr{P}$ is joint if it's not single or double.
\end{enumerate}
Theorem \ref{TheoremSection2AlgebraicIdentity1} gives $C_{\mathscr{P}}=(-1)^{r}$ for any double partition $\mathscr{P}$. Hence we can further write \eqref{EquationSection3ConverseInequality1} as
\begin{equation}  \label{EquationSection3ConverseInequality2}
\begin{split}
\sum_{(l_j)_{j\in \left [2r \right]}}^* \Lambda(f_{l_1}, \cdots, f_{l_{2r}})= & (-1)^{r}\sum_{\mathscr{P}: \text{$\mathscr{P}$ is double}} \sum_{(l_P)_{P\in \mathscr{P}}} \Lambda (f_{l_{\mathscr{P}(1)}}, \cdots, f_{l_{\mathscr{P}(2r)}})+  \\
& \sum_{\mathscr{P}: \text{$\mathscr{P}$ is single}} C_{\mathscr{P}} \sum_{(l_P)_{P\in \mathscr{P}}} \Lambda (f_{l_{\mathscr{P}(1)}}, \cdots, f_{l_{\mathscr{P}(2r)}})+  \\
& \sum_{\mathscr{P}: \text{$\mathscr{P}$ is joint}} C_{\mathscr{P}} \sum_{(l_P)_{P\in \mathscr{P}}} \Lambda (f_{l_{\mathscr{P}(1)}}, \cdots, f_{l_{\mathscr{P}(2r)}}).
\end{split}
\end{equation}
Integrate both sides of \eqref{EquationSection3ConverseInequality2} on $X$. Equation \eqref{EquationSection3DirectInequality2} implies
\begin{equation}  \label{EquationSection3ConverseInequality3}
\begin{split}
\int_X \sum_{\mathscr{P}: \text{$\mathscr{P}$ is double}} \sum_{(l_P)_{P\in \mathscr{P}}} & \Lambda (f_{l_{\mathscr{P}(1)}}, \cdots, f_{l_{\mathscr{P}(2r)}}) d\mu \leq  \\
& \sum_{\mathscr{P}: \text{$\mathscr{P}$ is single}} |C_{\mathscr{P}}| \big( \int_X |\sum_{(l_P)_{P\in \mathscr{P}}} \Lambda (f_{l_{\mathscr{P}(1)}}, \cdots, f_{l_{\mathscr{P}(2r)}})| d\mu \big)+  \\
& \sum_{\mathscr{P}: \text{$\mathscr{P}$ is joint}} |C_{\mathscr{P}}| \big( \int_X |\sum_{(l_P)_{P\in \mathscr{P}}} \Lambda (f_{l_{\mathscr{P}(1)}}, \cdots, f_{l_{\mathscr{P}(2r)}})| d\mu \big).
\end{split}
\end{equation}

For double partitions, we have
\begin{equation}  \label{EquationSection3ConverseInequality4}
\int_X \sum_{\mathscr{P}: \text{$\mathscr{P}$ is double}} \sum_{(l_P)_{P\in \mathscr{P}}} \Lambda (f_{l_{\mathscr{P}(1)}}, \cdots, f_{l_{\mathscr{P}(2r)}}) d\mu \geq \int_X \big( \sum_l \Lambda(f_{l})^{2} \big)^r d\mu
\end{equation}
by Lemma \ref{LemmaSection3Positivity}.

For a single partition $\mathscr{P}$, we use \eqref{EquationSection3DirectInequality5} to bound it.

For each joint partition $\mathscr{P}$, there exists $P_0\in \mathscr{P}$ such that $\#P_0\geq 3$. Successively applying \eqref{EquationSection3DirectInequality4}, monotonicity of $l^p$ norms, and H\"older's inequality gives
\begin{equation*}
\begin{split}
|\sum_{(l_P)_{P\in \mathscr{P}}} \Lambda (f_{l_{\mathscr{P}(1)}}, \cdots, f_{l_{\mathscr{P}(2r)}})| & \leq \prod_{P\in \mathscr{P}} \big(\sum_l \Lambda(f_{l})^{\#P} \big) \\
& \leq  \big(\sum_l \Lambda(f_{l})^{\#P_0} \big) \big(\sum_l \Lambda(f_{l})^{2} \big)^{\frac{2r-\#P_0}{2}}  \\
& \leq \big(\sum_l \Lambda(f_{l})^{2r} \big)^{\frac{\theta}{2r}\#P_0} \big(\sum_l \Lambda(f_{l})^{2} \big)^{\frac{1-\theta}{2}\#P_0} \big(\sum_l \Lambda(f_{l})^{2} \big)^{\frac{2r-\#P_0}{2}},
\end{split}
\end{equation*}
where $\theta$ satisfies
\begin{equation*}
\frac{1}{\#P_0}=\frac{\theta}{2r}+\frac{1-\theta}{2}. 
\end{equation*}
It's easy to check $\theta\geq \frac{1}{3}$, so
\begin{equation*}
|\sum_{(l_P)_{P\in \mathscr{P}}} \Lambda (f_{l_{\mathscr{P}(1)}}, \cdots, f_{l_{\mathscr{P}(2r)}})|\leq \big(\sum_l \Lambda(f_{l})^{2r} \big)^{\frac{1}{2r}} \big(\sum_l \Lambda(f_{l})^{2} \big)^{\frac{2r-1}{2}}.
\end{equation*}
Then we can use \eqref{EquationSection1ConverseInequalityNecessaryCondition} to estimate 
\begin{equation}  \label{EquationSection3ConverseInequality5}
\begin{split}
\int_X |\sum_{(l_P)_{P\in \mathscr{P}}} \Lambda (f_{l_{\mathscr{P}(1)}}, \cdots, f_{l_{\mathscr{P}(2r)}})| d\mu & \leq \int_X \big(\sum_l \Lambda(f_{l})^{2r} \big)^{\frac{1}{2r}} \big(\sum_l \Lambda(f_{l})^{2} \big)^{\frac{2r-1}{2}} d\mu  \\
& \leq \Big( \int_X \big(\sum_l \Lambda(f_{l})^{2r} \big) d\mu \Big)^{\frac{1}{2r}} \Big( \int_X \big(\sum_l \Lambda(f_{l})^{2} \big)^r d\mu \Big)^{\frac{2r-1}{2r}}  \\
& \lesssim \big(\int_X \Lambda(\sum_{l} f_l)^{2r} d\mu \big)^{\frac{1}{2r}}\Big( \int_X \big(\sum_l \Lambda(f_{l})^{2} \big)^r d\mu \Big)^{\frac{2r-1}{2r}}.
\end{split}
\end{equation}

Plug \eqref{EquationSection3ConverseInequality4}, \eqref{EquationSection3DirectInequality5}, and \eqref{EquationSection3ConverseInequality5} into \eqref{EquationSection3ConverseInequality3}. Divide both sides by $\|\Lambda(\sum\limits_{l=1} f_l)\|_{2r}^{2r}$ and let
\begin{equation*}
\tilde{t}:=\frac{\|\big( \sum\limits_{l=1}\Lambda(f_l)^2 \big)^{\frac{1}{2}}\|_{2r}}{\|\Lambda(\sum\limits_{l=1} f_l)\|_{2r}}.    
\end{equation*}
We obtain
\begin{equation*}
(\tilde{t})^{2r}\leq \widetilde{Q}(\tilde{t})
\end{equation*}
for some polynomial $\widetilde{Q}$ such that $deg(\widetilde{Q})\leq 2r-1$, which implies $\tilde{t}\lesssim_r 1$. This completes the proof of Theorem \ref{TheoremSection1ConverseInequality}.


\begin{appendices}
\section{Sharpness of the Conditions}  \label{AppendixA}
Let
\begin{equation*}
\mathcal{Z}_{\uppercase\expandafter{\romannumeral4}}:=\{(l_1, \cdots, l_{2r}): 1\leq l_j\leq L, l_1, \cdots, l_{2r} \text{ are all distinct.}\}
\end{equation*}
The main result in \cite{GPRYANewTypeOfSuperOrthogonality} and our theorems assume the vanishing condition \eqref{EquationSection1Superorthogonality} on $\mathcal{Z}_{\uppercase\expandafter{\romannumeral4}}$. Is there a type of superorthogonality that guarantees the direct/converse inequality, with a test set of index tuples that is strictly smaller than $\mathcal{Z}_{\uppercase\expandafter{\romannumeral4}}$? This question was posted in \cite{GPRYANewTypeOfSuperOrthogonality}.

For the direct inequality, one can immediately improve $\mathcal{Z}_{\uppercase\expandafter{\romannumeral4}}$ to
\begin{equation}  \label{EquationAppendixAWeakerVanishingSet}
\{(l_1, \cdots, l_{2r}): 1\leq l_j\leq L, |l_j-l_{j^{'}}|>O(1), \forall j\neq j^{'}.\}  
\end{equation}
by dividing $\{f_l\}_{1\leq l\leq L}$ into groups. This improved vanishing set is essentially the same as $\mathcal{Z}_{\uppercase\expandafter{\romannumeral4}}$. Does there exists a really smaller one? The following result means that the cardinality of such a set must be comparable with $\mathcal{Z}_{\uppercase\expandafter{\romannumeral4}}$.
\begin{proposition}  \label{PropositionAppendixDirect}
Let $(X, \mu)$ be a probability space. Let $r, L\in \mathbb{N}_+, \mathcal{Z}\subset \mathcal{Z}_{\uppercase\expandafter{\romannumeral4}}$ and $C>0$. If
\begin{equation*}
\|\sum_{l=1}^L f_l\|_{2r}\leq C\|(\sum_{l=1}^{L} |f_l|^2)^{\frac{1}{2}}\|_{2r}
\end{equation*}
holds for any family of real-valued functions $\{f_l\}_{1\leq l\leq L}$ such that
\begin{equation*}
\int_{X} f_{l_1}f_{l_2}\cdots f_{l_{2r-1}}f_{l_{2r}} d\mu=0, \text{whenever\ } (l_1, \cdots, l_{2r})\in \mathcal{Z},
\end{equation*}
then
\begin{equation*}
\#\mathcal{Z}\gtrsim_{r, C} \#\mathcal{Z}_{\uppercase\expandafter{\romannumeral4}}.    
\end{equation*}
\end{proposition}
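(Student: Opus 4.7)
The plan is a probabilistic counting argument with a short-and-fat witness: if $\#\mathcal{Z}$ were much smaller than $\#\mathcal{Z}_{\uppercase\expandafter{\romannumeral4}}$, then a small subset $S\subset[L]$ exists on which the vanishing hypothesis holds for free, and the constant-indicator family $f_l:=\mathbf{1}_S(l)$ would then violate the direct inequality with constant $C$.

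Set $k:=\lceil C^2\rceil+1$, so that $\sqrt{k}>C$, and focus first on the main regime $L\ge k$. Suppose toward a contradiction that $\#\mathcal{Z}<\#\mathcal{Z}_{\uppercase\expandafter{\romannumeral4}}/k^{2r}$. Draw $S\subset[L]$ uniformly at random among the $k$-element subsets; because every tuple in $\mathcal{Z}_{\uppercase\expandafter{\romannumeral4}}$ has $2r$ distinct entries, the probability that a fixed such tuple lies entirely in $S$ equals $\binom{L-2r}{k-2r}/\binom{L}{k}=k(k-1)\cdots(k-2r+1)/\#\mathcal{Z}_{\uppercase\expandafter{\romannumeral4}}\le k^{2r}/\#\mathcal{Z}_{\uppercase\expandafter{\romannumeral4}}$. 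Linearity of expectation then gives that the expected number of tuples of $\mathcal{Z}$ contained in $S$ is at most $\#\mathcal{Z}\cdot k^{2r}/\#\mathcal{Z}_{\uppercase\expandafter{\romannumeral4}}<1$, so some outcome $S_0$ contains none.

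With such an $S_0$, define $f_l(x):=\mathbf{1}_{S_0}(l)$, viewed as a constant function on the probability space $X$. For any $(l_1,\ldots,l_{2r})\in\mathcal{Z}$ at least one index lies outside $S_0$, so $\int_X f_{l_1}\cdots f_{l_{2r}}\,d\mu=\prod_j \mathbf{1}_{S_0}(l_j)=0$ and the vanishing hypothesis is satisfied. However $\|\sum_l f_l\|_{2r}=\#S_0=k$ while $\|(\sum_l |f_l|^2)^{1/2}\|_{2r}=\sqrt{k}$, so the ratio equals $\sqrt{k}>C$, contradicting the assumed bound. Therefore $\#\mathcal{Z}\ge\#\mathcal{Z}_{\uppercase\expandafter{\romannumeral4}}/k^{2r}$, and since $k$ depends only on $C$ the implicit constant depends only on $r$ and $C$.

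The complementary regime $L<k$ is disposed of quickly: if $L<2r$ then $\#\mathcal{Z}_{\uppercase\expandafter{\romannumeral4}}=0$ and the bound is vacuous, while for $2r\le L<k$ the quantity $\#\mathcal{Z}_{\uppercase\expandafter{\romannumeral4}}$ is bounded by a constant depending only on $r$ and $C$ and can be absorbed into the implicit constant. There is no genuine obstacle in the argument; the only point requiring care is to choose $k$ strictly above $C^2$ so that the trivial indicator witness actually beats the direct inequality.
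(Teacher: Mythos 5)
Your argument is correct and is essentially the paper's proof in probabilistic clothing: the first-moment computation over a random $(\lceil C^2\rceil+1)$-element subset is exactly the paper's double-counting inequality $\binom{L}{m}\le \#\mathcal{Z}\,\binom{L-2r}{m-2r}$, and the witness family of constant indicator functions is the same. The only soft spot is the regime $2r\le L\le C^2$, where the direct inequality holds for \emph{all} families by Cauchy--Schwarz so $\mathcal{Z}=\emptyset$ is admissible and no absorption into the implicit constant is possible; the paper waves this away with ``we can assume $2r\le m\le L$'' just as you do, so it is not a gap relative to the paper's own treatment.
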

\begin{proof}
Pick $m=[C^2]+1$. We can assume $2r\leq m\leq L$. For any $Z\subset \{l: 1\leq l\leq L\}$ such that $\#Z=m$, the intersection $Z^{2r}\cap \mathcal{Z}$ must be non-empty. Otherwise take $f_l\equiv 1$ if $l\in Z$ and $f_l\equiv 0$ if $l\notin Z$. Then
\begin{equation*}
\|\sum_{l=1}^L f_l\|_{2r}=m>Cm^{\frac{1}{2}}=C\|(\sum_{l=1}^{L} |f_l|^2)^{\frac{1}{2}}\|_{2r},
\end{equation*}
a contradiction. Now we have
\begin{equation*}
\binom{L}{m}\leq \sum_{Z: \#Z=m} \sum_{(l_1, \cdots, l_{2r})\in \mathcal{Z}} 1_{(l_1, \cdots, l_{2r})\in Z^{2r}}=\sum_{(l_1, \cdots, l_{2r})\in \mathcal{Z}} \sum_{Z: \#Z=m}  1_{(l_1, \cdots, l_{2r})\in Z^{2r}}=(\#\mathcal{Z})\binom{L-2r}{m-2r},
\end{equation*}
which implies
\begin{equation*}
\#\mathcal{Z}\gtrsim_{r, C} L^{2r}\sim_{r} \#\mathcal{Z}_{\uppercase\expandafter{\romannumeral4}}.
\end{equation*}
and finishes the proof.
\end{proof}
\begin{remark}
Proposition \ref{PropositionAppendixDirect} only gives a lower bound on $\#\mathcal{Z}$, which is far from determining the structure of such sets.
\end{remark}

The converse inequality turns out to be more sensitive. The counterexample given below shows that even the slightly weaker vanishing set \eqref{EquationAppendixAWeakerVanishingSet} is not sufficient. 
\begin{proposition}
Fix an integer $r\geq 2$. For any $\epsilon>0$, there exists a probability space $(X, \mu)$ and a family of real-valued functions $f_l, 1\leq l\leq 2L$, such that
\begin{equation} \label{EquationSection1WeakerSuperorthogonality}
\int_{X} f_{l_1}f_{l_2}\cdots f_{l_{2r-1}}f_{l_{2r}} d\mu=0, \text{whenever\ } l_1, \cdots, l_{2r} \text{\ satisfy\ } |l_j-l_{j^{'}}|>1,
\end{equation}
and
\begin{equation*}
\|(\sum_{l=1}^{2L} |f_l|^{2r})^{\frac{1}{2r}}\|_{2r}\lesssim \|\sum_{l=1}^{2L} f_l\|_{2r}\lesssim \epsilon \|(\sum_{l=1}^{2L} |f_l|^2)^{\frac{1}{2}}\|_{2r}.
\end{equation*}
\end{proposition}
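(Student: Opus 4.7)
The plan is to construct the $f_l$'s in pairs $(f_{2k-1}, f_{2k})$ that nearly cancel each other. Independence between different pairs, realized on a product probability space, will both enforce \eqref{EquationSection1WeakerSuperorthogonality} automatically and allow standard Khintchine/Rosenthal estimates to control the three norms. The cancellation will make $\|\sum_l f_l\|_{2r}$ small relative to the square function, while the non-cancellation of the individual $f_l$'s keeps the square function and the $\ell^{2r}$ function of the expected sizes.

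Concretely, I would take a small probability space $(Y, \nu)$ with two mean-zero functions $\phi, \psi: Y \to \mathbb{R}$ such that $\phi + \psi$ is much smaller in $L^2(\nu)$ than $\phi$ and $\psi$ individually; a convenient choice is $Y = \{-1, +1\}^2$ with uniform measure, $\phi(x, y) := x$, and $\psi(x, y) := -x + \delta y$, for a parameter $\delta \in (0, 1)$ to be chosen. Let $(X, \mu) := (Y^L, \nu^{\otimes L})$ with $L$ to be chosen large, and set
\begin{equation*}
f_{2k-1}(y_1, \ldots, y_L) := \phi(y_k), \qquad f_{2k}(y_1, \ldots, y_L) := \psi(y_k), \quad 1 \leq k \leq L.
\end{equation*}
To verify \eqref{EquationSection1WeakerSuperorthogonality}, I would partition the indices in a tuple $(l_1, \ldots, l_{2r})$ satisfying the hypothesis according to which pair $\{2k-1, 2k\}$ each $l_j$ belongs to. The condition $|l_j - l_{j'}| > 1$ forces each such pair to contain at most one $l_j$. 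Independence across different $k$'s then makes $\int_X \prod_j f_{l_j}\, d\mu$ factor over those pairs, and any pair with a single representative contributes a factor $\int_Y \phi\, d\nu = 0$ or $\int_Y \psi\, d\nu = 0$.

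For the norm estimates, $\phi + \psi = \delta y$ is bounded and mean-zero with $L^2(\nu)$ norm $\delta$, so Rosenthal's inequality yields $\|\sum_l f_l\|_{2r} \sim_r \delta \sqrt{L}$ once $L$ is at least some $r$-dependent constant. The sums $\sum_k (\phi^2 + \psi^2)(y_k)$ and $\sum_k (\phi^{2r} + \psi^{2r})(y_k)$ concentrate around their means, giving $\|(\sum_l |f_l|^2)^{1/2}\|_{2r} \sim_r \sqrt{L}$ and $\|(\sum_l |f_l|^{2r})^{1/(2r)}\|_{2r} \sim_r L^{1/(2r)}$ by standard i.i.d.\ moment estimates. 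The two required inequalities then reduce to $L^{1/(2r)} \lesssim_r \delta \sqrt{L}$ and $\delta \sqrt{L} \lesssim \epsilon \sqrt{L}$, which are simultaneously solved by $\delta := \epsilon/2$ and $L$ large enough that $L^{-(r-1)/(2r)} \leq c_r \epsilon$. This last step is precisely where the hypothesis $r \geq 2$ enters: the exponent $-(r-1)/(2r)$ is negative only when $r > 1$. The main obstacle will be making the Rosenthal-type asymptotics tight in both directions; the lower bound $\|\sum_l f_l\|_{2r} \geq \|\sum_l f_l\|_2 = \delta \sqrt{L/2}$ is automatic, so only the corresponding upper bound actually needs the full strength of Rosenthal (or a direct expansion).
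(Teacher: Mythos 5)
Your construction is correct and follows essentially the same route as the paper's: pairs of near-cancelling mean-zero functions living on independent coordinates, with the constraint $|l_j-l_{j'}|>1$ forcing all indices into distinct pairs so that independence and mean-zero yield the vanishing, and the three norms computed to be of orders $\delta\sqrt{L}$, $\sqrt{L}$, and $L^{1/(2r)}$ with $\delta\sim\epsilon$ and $L$ large. The only cosmetic difference is that the paper takes $f_{2k}=-(1-\epsilon)f_{2k-1}$ and invokes Theorem \ref{TheoremSection1DirectInequality} for the upper bound on $\|\sum_l f_l\|_{2r}$, whereas you perturb by an independent Rademacher and use Khintchine; both reduce to the same exponent count, valid exactly when $r\geq 2$.
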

\begin{proof}
Given $\epsilon>0$, pick $L>\epsilon^{-10}$. Take a probability space $(X, \mu)$ such that there exists an i.i.d. sequence of real-valued bounded random variables $\{g_l\}_{1\leq l\leq L}$ satisfying $Eg_1=0, E|g_1|^2\neq 0$. Define
\begin{align*}
& f_{2l-1}=g_l,  \\
& f_{2l}=-(1-\epsilon)g_l,
\end{align*}
for $1\leq l\leq L$. It's easy to see $Eg_{l_1}\cdots g_{l_{2r}}=0$ whenever $l_1, \cdots, l_{2r}$ are all distinct, so \eqref{EquationSection1WeakerSuperorthogonality} holds.
Moreover, Theorem \ref{TheoremSection1DirectInequality} implies
\begin{equation*}
\|\sum_{l=1}^{2L} f_l\|_{2r}=\epsilon \|\sum_{l=1}^{L} g_l\|_{2r}
\lesssim_{r} \epsilon\|( \sum_{l=1}^L |g_l|^2 )^{\frac{1}{2}}\|_{2r}\leq \epsilon\|(\sum_{l=1}^{2L} |f_l|^2 )^{\frac{1}{2}}\|_{2r}.
\end{equation*}
On the other hand, we have
\begin{equation*}
\|\sum_{l=1}^{2L} f_l\|_{2r}=\epsilon \|\sum_{l=1}^{L} g_l\|_{2r}\geq \epsilon \|\sum_{l=1}^{L} g_l\|_2=\epsilon L^{\frac{1}{2}} \|g_1\|_2\gtrsim L^{\frac{1}{2r}} \|g_1\|_{2r}\sim \|( \sum_{l=1}^{2L} |f_l|^{2r} )^{\frac{1}{2r}}\|_{2r}
\end{equation*}
by H\"older's inequality. This completes the proof.
\end{proof}
\end{appendices}

\bibliographystyle{plain}  
\bibliography{ref}

\begin{thebibliography}{1}

\bibitem{GPRYANewTypeOfSuperOrthogonality}
Philip~T. Gressman, Lillian~B. Pierce, Joris Roos, and Po-Lam Yung.
\newblock A new type of superorthogonality, 2022.

\bibitem{IWIonescu-Wainger}
Alexandru~D. Ionescu and Stephen Wainger.
\newblock {$L^p$} boundedness of discrete singular {R}adon transforms.
\newblock {\em J. Amer. Math. Soc.}, 19(2):357--383, 2006.

\bibitem{MSZKIonescu-Wainger}
Mariusz Mirek, Elias~M. Stein, and Pavel Zorin-Kranich.
\newblock Jump inequalities for translation-invariant operators of {R}adon type on {$\mathbb{Z}^d$}.
\newblock {\em Adv. Math.}, 365:107065, 57, 2020.

\bibitem{PaleySuperorthogonality}
R.~E. A.~C. Paley.
\newblock A {R}emarkable {S}eries of {O}rthogonal {F}unctions ({I}).
\newblock {\em Proc. London Math. Soc. (2)}, 34(4):241--264, 1932.

\bibitem{PierceOnSuperorthogonality}
Lillian~B. Pierce.
\newblock On superorthogonality.
\newblock {\em J. Geom. Anal.}, 31(7):7096--7183, 2021.

\bibitem{TaoIonescu-Wainger}
Terence Tao.
\newblock The {I}onescu-{W}ainger multiplier theorem and the adeles.
\newblock {\em Mathematika}, 67(3):647--677, 2021.

\end{thebibliography}

\Address

\end{document}